\theoremstyle{plain} 
\newtheorem{theorem}{Theorem}
\newtheorem{corollary}[theorem]{Corollary} 
\newtheorem{proposition}[theorem]{Proposition}
\newtheorem{problem}[theorem]{Problem}
\newtheorem{example}[theorem]{Example}
\theoremstyle{remark}
\theoremstyle{definition}
\newtheorem{definition}[theorem]{Definition}
\newcommand{\F}{\mathcal{F}}
\newcommand{\bR}{\mathbb{R}}
\newcommand{\bZ}{\mathbb{Z}}
\newcommand{\bN}{\mathbb{N}}
\newcommand{\bC}{\mathbb{C}}
\newcommand{\bQ}{\mathbb{Q}}
\newcommand{\bK}{\mathbb{K}}
\newcommand{\cG}{\mathcal{G}}
\newcommand{\rng}{\mathrm{rng}}
\newcommand{\QC}{\mathrm{QC}}
\newcommand{\osc}{\mathrm{osc}}
\newcommand{\card}{\mathrm{card}}
\newcommand{\co}{\mathfrak{c}}
\newcommand{\Cliq}{\mathrm{Cliq}}
\newcommand{\PWD}{\mathrm{PWD}}
\newcommand{\Cont}{\mathrm{C}}
\newcommand{\SQC}{\mathrm{QC}^\ast}
\newcommand{\cl}{\mathrm{cl}}
\newcommand{\restr}{{\restriction}}
\renewcommand{\int}{\mathrm{int}}
\title{Borsik's properties of topological spaces and their applications}
\author{Tomasz Natkaniec}
\address{Institute of Mathematics, Faculty of Mathematics, Physics and Informatics,
University of Gda\'{n}sk, 80-308 Gda\'{n}sk, Poland}
\email{tomasz.natkaniec@ug.edu.pl}
\begin{document}

{
	\renewcommand{\thefootnote}{}
	\footnotetext{Corresponding author's email: tomasz.natkaniec@ug.edu.pl}
}

\subjclass[2010]{Primary: 54C08; 54C35}

\keywords{quasi-continuous function, super quasi-continuous function, pointwise discontinuous function, cliquish function,   Borel function, lineability, coneability,  algebrability, strong algebrability, independent family, exponential-like function}

\begin{abstract}
Let $X$ be an uncountable Polish space. \u{L}ubica Hol\'{a} showed recently that there are $2^\co$ quasi-continuous real valued functions defined on the uncountable Polish space $X$ that are not Borel measurable. Inspired by Hol\'{a}'s result, we are extending it in two directions. First, we prove that if $X$ is an uncountable Polish space and $Y$ is any Hausdorff space with $|Y|\ge 2$ then the family of all non-Borel measurable quasi-continuous functions has cardinality $\ge 2^{\co}$. Secondly, we show that the family of quasi-continuous non Borel functions from $X$ to $Y$ may contain big algebraic structures.
\end{abstract}

\date{24.11.2023}
\maketitle

\section{Introduction}
\subsection{Super quasi-continuous functions}
Let $X$ and $Y$ be topological spaces. We  will  consider the following families of functions from $X$ to $Y$.
\begin{description}
\item[$\PWD(X,Y)$]
$f$ is {\it pointwise discontinuous} if the set $C_f$ of continuity points of $f$ is dense in $X$.
\item[$\QC(X,Y)$]
$f$ is {\it quasi-continuous} if for each $x_0\in X$, 
a neighbourhood $W$ of $x_0$ and a neighbourhood $V$ of $f(x_0)$ there is a non-empty open set $W_0\subset
W$ such that $f(W_0)\subset V$.
\item[$\QC^\ast(X,Y)$]
$f$ is {\it super quasi-continuous} if $f\restr C_f$ is dense in $f$, i.e. for each $x_0\in X$, 
a neighbourhood $W$ of $x_0$ and a neighbourhood $V$ of $f(x_0)$ there is $x\in C_f\cap W$ with $f(x)\in V$.
\item[$\Cliq(X,Y)$]
Let $Y$ be a metric space. Then $f$ is {\it cliquish} if for each $x_0\in X$, $\varepsilon>0$, and a neighbourhood $W$ of $x_0$ there is a non-empty open set $W_0\subset W$ with $\osc_f(W_0)<\varepsilon$.
\end{description}
The relationships between those classes are clear. (See,
e.g., \cite{KN, TNeu}.) 
In particular, for   topological spaces $X$ and  $Y$ the
following inclusions hold (the right arrows hold under assumption that $Y$ is metric):

\begin{picture}(0,85)
	\put(45,40){\makebox(0,0){$\Cont(X,Y)$}}
	\put(65,40){\vector(1,0){15}}
	\put(105,40){\makebox(0,0){$\SQC(X,Y)$}}
	\put(130,42){\vector(1,1){20}}
	\put(130,38){\vector(1,-1){20}}
	\put(180,62){\makebox(0,0){$\QC(X,Y)$}}
	\put(180,20){\makebox(0,0){$\PWD(X,Y)$}}
	\put(210,62){\vector(1,-1){20}}
	\put(210,20){\vector(1,1){20}}
	\put(260,40){\makebox(0,0){$\Cliq(X,Y)$}}
\end{picture}
 Generally, all those inclusions are proper. 
 \begin{example}\label{ex1}
 	Let $\bQ=\{ q_n\colon n\in\bN\}$ be a subspace of rationals of the real line. Then the function $f\colon \bQ\to\bR$ defined as 
 	$$f(x)=\sum_{q_n\le x}\frac{1}{2^n}$$
 	is right-hand continuous, so quasi-continuous at each $x\in \bQ$, and $C_f=\emptyset$. Thus $f\in\QC(\bQ,\bR)\setminus\PWD(\bQ,\bR)$.
 \end{example}
However, if $X$ is a Baire space and $Y$ is a metric space then the diagram above reduces to the following one.
$$\Cont(X,Y)\longrightarrow \SQC(X,Y)=\QC(X,Y)\longrightarrow \Cliq(X,Y)=\PWD(X,Y).$$
This is a consequence of the fact that $f\in\Cliq(X,Y)$ iff the set $\{ x\in X\colon \osc_f(x)>0\}$ is meager in $X$.

The concept of pointwise discontinuity  has a long tradition. It appears in {\it Topology} Kuratowski's, see \cite[ Chapter I.13.III]{KKur}. The same property was rediscovered by McCoy and Hammer in \cite{MCH}, who named it {\it dense continuity}, see also \cite{HH}. The notion of quasi-continuity was introduced by Kempisty in \cite{SK}, however this idea  was perphaps used the first time by Baire in \cite{Baire} in the study of separately continuous functions. The name of super quasi-continuity was introduced by Katafiasz and Natkaniec in \cite{KN}, and this is probably the only place where it was used. The same property has been introduced and study under the name $A(X,Y)$ by Hol\'{a} and Hol\'{y} \cite{HH}. 

Notice that for any spaces $X,Y$ the equality 
$$\SQC(X,Y)=\QC(X,Y)\cap \PWD(X,Y)$$
holds. It is a simple consequence of definitions, cf. \cite{HH}.

Finally, observe that $\QC(X,Y)\not\subset\PWD(X,Y)$, even if $X$ is a Baire space. Indeed, let $X$ be the real line with the Euclidean topology, $Y$ be the Sorgenfrey line, and $f\colon X\to Y$ be the indentity. Then $f\in\QC(X,Y)\setminus \PWD(X,Y)$, see \cite{ZP}, or \cite[Example 3.1]{HH}. The question for which spaces $Y$ the inclusion $\QC(X,Y)\subset\PWD(X,Y)$ holds under assumption that $X$ is a Baire space has been intensively studied, see e.g, \cite[Section 2.3]{HHM}.

\subsection{Bors\'{i}k's properties}
Let $X$ be a topological space. The following properties of $X$ were considered by Jan Bors\'{i}k in \cite{JB}.
\begin{definition}\label{bp1}
$X$ has the property $BP_1$ whenever for every non-empty closed nowhere dense set $F\subset X$ and an open set $G$ such that $F\subset\cl(G)$ there exists a sequence $\{ K_{m,n}\colon m,n\in\bN\}$ of non-empty open subsets of $X$ such that:
	\begin{enumerate}
		\item[(i)]
		$\cl(K_{m,n})\subset G\setminus F$ for all $n,m\in \bN$;
		\item[(ii)]
		$\cl(K_{m,n})\cap\cl(K_{i,j})=\emptyset$ whenever $(m,n)\ne (i,j)$;
		\item[(iii)]
		for each $x\in X\setminus F$ there is a neighbourhood $V$ of $x$ such that the set $\{ (m,n)\colon V\cap K_{m,n}\ne\emptyset\}$ has at most one element;
		\item[(iv)]
		for each $x\in F$ and a neighbourhood $V$ of $x$, and for all $m\in\bN$ there is $n_0\in\bN$ with $K_{m,n}\subset V$ for all $n\ge n_0$. 
	\end{enumerate}
\end{definition}

\begin{definition}\label{bp2}
	$X$ has the property $BP_2$ whenever for every non-empty closed nowhere dense set $F\subset X$ and an open set $G$ such that $F\subset\cl(G)$ there exists a sequence $\{ K_{m,n}\colon m,n\in\bN\}$ of non-empty open subsets of $X$ which satisfy the conditions (i)--(iii) from Definition~\ref{bp1} and moreover,
	\begin{enumerate}
		\item[(v)]
		for each $x\in F$ and a neighbourhood $V$ of $x$, and for all $m\in\bN$ there is $n\in\bN$ with $K_{m,n}\cap V\neq\emptyset$. 
	\end{enumerate}
\end{definition}

\begin{definition}\label{bp3}
	$X$ has the property $BP_3$ whenever it satisfies all conditions of $BP_2$ with respect to any non-empty closed nowhere dense set $F$ and $G=X$.
\end{definition}

Clearly, we have implications:
$$BP_1\Longrightarrow BP_2\Longrightarrow BP_3.$$

\begin{theorem}\mbox{\cite{JB}}
\begin{enumerate}
	\item 
	Every metrizable separable space has the property $BP_1$.
	\item
	Every pseudometrizable space has the property $BP_2$.
	\item
	Every perfectly normal locally connected space has the property $BP_3$.
	\item
	The hedgehog with uncountably many spines is an example of metrizable space without property $BP_1$.
	\item
	Let $X=\beta\bN$ be the  \v{C}ech-Stone compactification of $\bN$. Then $X$ is perfectly normal space without property $BP_3$.
\end{enumerate}
\end{theorem}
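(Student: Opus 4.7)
My plan is to dispatch the five items separately: (1)--(3) require explicit constructions of the doubly-indexed family $\{K_{m,n}\}$, while (4)--(5) require counterexamples, i.e.\ a choice of $F$ that defeats every candidate family. The recurring theme is the tension between accumulation of $\{K_{m,n}\}$ at $F$ (conditions (iv), (v)) and the local discreteness off $F$ (condition (iii)); each positive result uses the structural hypothesis on $X$ to reconcile the two, and each counterexample exhibits an $F$ for which no reconciliation is possible.

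For (1), I fix $F$ closed nowhere dense and $G$ open with $F\subset\cl(G)$, and use separability to pick a countable dense subset $\{y_k\}_{k\in\bN}\subseteq F$. Because $F$ is nowhere dense, $G\setminus F$ is dense in $G$, so for each $k$ I select $p_{k,j}\in G\setminus F$ with $p_{k,j}\to y_k$ and enclose $p_{k,j}$ in a metric ball $B_{k,j}$ of radius at most $\tfrac13\min\{\dist(p_{k,j},F),\dist(p_{k,j},X\setminus G)\}$, letting $r_{k,j}\to 0$ quickly enough that the closures are pairwise disjoint and the family is discrete off $F$. A bookkeeping reindexing of $\{B_{k,j}\}$ as a double sequence $\{K_{m,n}\}$ in which each row accumulates at every $y_k$ at every scale then yields (i)--(iv). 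Part (2) follows the same template without separability: one takes maximal $2^{-n}$-separated subsets of $F$ at each scale, produces a $\sigma$-locally-finite family of pseudometric balls, and obtains only the weaker hitting condition (v). Part (3) uses perfect normality to write $F=\bigcap_n U_n$ with $F\subset U_{n+1}\subset\cl(U_{n+1})\subset U_n\subset G$, and local connectedness to harvest open components of each $U_n\setminus\cl(U_{n+1})$ as candidate $K_{m,n}$; condition (ii) is then immediate from the nested structure and (v) follows from the fact that the $U_n$ shrink to $F$.

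For (4), the hedgehog $H(\kappa)$ with $\kappa>\aleph_0$ spines is metrizable (hence enjoys $BP_2$) but fails $BP_1$ for an uncountable transversal $F$, for instance one midpoint per spine: a countable double sequence can accumulate at only countably many points of $F$, contradicting the row-wise accumulation demanded by (iv). For (5), I take $F=\beta\bN\setminus\bN$ and $G=\beta\bN$. Every $K_{m,n}$ is then a non-empty subset of the discrete open set $\bN$, and (ii) reduces to pairwise disjointness in $\bN$ (since for disjoint $A,B\subset\bN$, $\cl(A)\cap\cl(B)=\emptyset$ in $\beta\bN$). Condition (v) then forces, for each $m$, the set $\bigcup_n K_{m,n}$ to meet every member of every free ultrafilter on $\bN$, hence to belong to every such ultrafilter, hence to be cofinite in $\bN$; but rows with different $m$ are pairwise disjoint by (ii), and two disjoint cofinite subsets of $\bN$ cannot coexist---a contradiction. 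The main obstacle throughout is engineering the family to satisfy accumulation and discreteness simultaneously: in (1) the crux is the reindexing that distributes the balls across rows while preserving the discreteness obtained from the shrinking radii, and in the counterexamples the crux is identifying the right $F$ to exploit non-separability of the hedgehog or the ultrafilter structure of $\beta\bN$.
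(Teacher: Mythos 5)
The paper does not prove this theorem; it is quoted from Bors\'{i}k's paper \cite{JB} without proof, so your proposal has to stand entirely on its own. Item (5) is essentially correct: condition (i) forces each $K_{m,n}$ into $\bN$, condition (v) forces each row-union $\bigcup_n K_{m,n}$ to meet every member of every free ultrafilter and hence to be cofinite, and (ii) makes two such rows disjoint, which is a genuine contradiction. The other items have gaps.

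The central problem is that you never engage with the quantifier that separates $BP_1$ from $BP_2$: condition (iv) is a \emph{tail} condition (``for all $n\ge n_0$'', simultaneously for every $x\in F$ and every neighbourhood $V$ of $x$), whereas (v) asks only for \emph{some} $n$. (As printed in the paper, (iv) literally reads $K_{m,n}\subset V$, which is unsatisfiable as soon as $F$ contains two points with disjoint neighbourhoods --- a tail of one row would have to lie in two disjoint sets; the workable reading is $K_{m,n}\cap V\ne\emptyset$ for all $n\ge n_0$.) In your construction for (1) each $K_{m,n}$ is a single ball of radius at most $\frac13\dist(p_{k,j},F)$ sitting near one point $y_k$; such a ball cannot meet a small neighbourhood of a distant point of $F$, so if $F$ contains two points at distance $1$, no reindexing of single balls can make a tail of one row meet the $\frac14$-neighbourhoods of both. ``Each row accumulates at every $y_k$ at every scale'' yields only infinitely many $n$, i.e.\ condition (v): you have proved $BP_2$, not $BP_1$. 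The repair --- and the place where separability actually enters --- is to let $K_{m,n}$ be a \emph{finite union} of small balls attached to the initial segment $y_1,\dots,y_n$ of the dense set. In (3), the components of $U_n\setminus\cl(U_{n+1})$ need not have pairwise disjoint closures (components of an open set can share boundary points, and consecutive ``annuli'' meet along $\bd(U_{n+1})$), and the family is not discrete off $F$ at points of $\bd(U_{n+1})\setminus F$, so (ii) and (iii) both fail for your candidate family. In (4), your key claim --- that a countable double sequence can accumulate at only countably many points of $F$ --- is false: in the hedgehog, take $F$ to be one point per spine at parameter $\frac12$ and let $K_n$ be the union over \emph{all} spines of the arcs with parameter in $\left(\frac12+\frac34\cdot 4^{-n},\ \frac12+4^{-n}\right)$; this is a countable family of non-empty open sets with pairwise disjoint closures, discrete off $F$, whose tail meets every neighbourhood of every one of the uncountably many points of $F$. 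So this $F$ does not defeat $BP_1$ for the reason you give, and the actual counterexample must exploit something finer than counting accumulation points.
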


Another version  of the property $BP_3$ was introduced at the same time by Zbigniew Grande \cite{ZG}.
\begin{definition}\label{bp3'}
	A topological space $X$ has the property $BP_3'$ if for every 
	nowhere dense set $F$ there is a sequence $(U_m)_{m\in\bN}$ of pairwise disjoint non-empty open sets such that
	\begin{enumerate}
		\item[(i')]
		$U_m\cap  F=\emptyset$ for all $m\in\bN$;
		\item[(ii')]
		$F\subset \cl(U_m)$ for each $m\in\bN$;
		\item[(iii')]
		$X\setminus (F\cup\bigcup_{m>0}U_m)\subset\cl(U_0)$.
	\end{enumerate}
\end{definition}

\begin{proposition}\label{p1}
	$$BP_3\Longrightarrow BP_3'.$$
\end{proposition}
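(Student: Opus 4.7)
My plan is to apply $BP_3$ to $F$ (which I may assume closed and nonempty, after replacing $F$ with $\cl(F)$; note that this strengthens (i')--(iii'), and (ii') for $\cl(F)$ together with (iii') for $\cl(F)$ implies the versions for $F$) with $G=X$, obtaining a double sequence $\{K_{m,n}\}$ satisfying conditions (i)--(iii) and (v).

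Before defining the $U_m$'s I would perform a \emph{regularization}: replace each $K_{m,n}$ by its regular open hull $\int(\cl(K_{m,n}))$. The closure is unchanged, so (i) and (ii) persist; (v) persists because the sets only grow; and (iii) persists because any $y\in V\cap\int(\cl(K_{m,n}))\subseteq\cl(K_{m,n})$ forces $V\cap K_{m,n}\ne\emptyset$, so the family of $K_{m,n}$ met by a given $V$ is unchanged. Henceforth each $K_{m,n}$ is regular open.

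Now I would set $U_m:=\bigcup_n K_{m,n}$ for $m\ge 1$ and enlarge the zeroth block to
\[U_0:=\bigcup_n K_{0,n}\cup W,\qquad W:=X\setminus\Bigl(F\cup\cl\Bigl(\bigcup_{m\ge 1}U_m\Bigr)\Bigr).\]
Openness of $U_0$, nonemptiness and pairwise disjointness of the $U_m$'s (from (ii) and the definition of $W$), condition (i') (from (i) and $W\cap F=\emptyset$), and condition (ii') (from (v) applied row by row, using $\bigcup_n K_{0,n}\subseteq U_0$ for the $m=0$ case) are all routine.

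The substantive step is (iii'). Given $x\in X\setminus(F\cup\bigcup_{m\ge 1}U_m)$, the case $x\in U_0$ is immediate, so I may assume $x\in\cl(\bigcup_{m\ge 1}U_m)\setminus\bigcup_{m\ge 1}U_m$. For a neighbourhood $V$ of $x$, shrink it via (iii) so that $V$ meets at most one $K_{m_0,n_0}$, and further so that $V\cap F=\emptyset$; necessarily $m_0\ge 1$ and $x\in\cl(K_{m_0,n_0})$. A short argument from (iii) yields $V\cap\cl(\bigcup_{m\ge 1}U_m)=V\cap\cl(K_{m_0,n_0})$, hence $V\cap U_0\supseteq V\cap W=V\setminus\cl(K_{m_0,n_0})$; if this were empty, the open set $V$ would lie in $\cl(K_{m_0,n_0})$ and hence, by regular openness, in $K_{m_0,n_0}$ itself, forcing $x\in U_{m_0}$, contradicting $x\notin\bigcup_{m\ge 1}U_m$. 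The main obstacle is exactly this last step, which the regularization is tailor-made to overcome: without it, a $K_{m_0,n_0}$ whose closure has interior strictly larger than itself could swallow every neighbourhood of $x$ into $\cl(K_{m_0,n_0})\setminus K_{m_0,n_0}$, and $U_0$ would fail to accumulate at $x$.
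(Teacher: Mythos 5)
Your construction coincides with the paper's: the paper takes $U_m=\bigcup_n K_{m,n}$ for $m>0$ and $U_0=\int(X\setminus(F\cup\bigcup_{m>0}U_m))=X\setminus(F\cup\cl(\bigcup_{m>0}U_m))$, which is exactly your $W$ (and contains each $K_{0,n}$ automatically, by (i)--(iii)), and then simply asserts that (i')--(iii') hold. What you add is the regularization of the $K_{m,n}$ together with an actual verification of (iii'), and this is not mere pedantry --- without regular openness the one-line claim can genuinely fail. Indeed, if some $K_{m_0,n_0}$ with $m_0\ge 1$ has $\int(\cl(K_{m_0,n_0}))\supsetneq K_{m_0,n_0}$ (e.g.\ a punctured interval in $\bR$; puncturing preserves (i), (ii), (iii) and (v), so such witnesses are admissible), then any $x\in\int(\cl(K_{m_0,n_0}))\setminus K_{m_0,n_0}$ lies in $X\setminus(F\cup\bigcup_{m>0}U_m)$, while $U_0$, being open and disjoint from $K_{m_0,n_0}$, is disjoint from $\cl(K_{m_0,n_0})$ and hence from the open neighbourhood $\int(\cl(K_{m_0,n_0}))$ of $x$; so $x\notin\cl(U_0)$ and (iii') fails for the unregularized family. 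Your checks are all sound: regularization leaves closures unchanged, so (i) and (ii) survive; an open set meets $\int(\cl(K))$ iff it meets $K$, so (iii) survives; (v) survives by monotonicity; the identity $V\cap\cl(\bigcup_{m\ge1}U_m)=V\cap\cl(K_{m_0,n_0})$ follows from (iii) as you indicate; and the reduction of a non-closed $F$ to $\cl(F)$ via (ii')${}+{}$(iii') is correct. In short, your proof is the paper's proof carried out carefully, and the regularization step fills a real gap in the published argument.
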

\begin{proof}
Let $F_0$ be a nowhere dense set in $X$and let $F$ be a non-empty closed and nowhere dense set  such that $F_0\subset F$.  Let $\{ K_{m,n}\colon m,n\in\bN\}$ be a family witnessing the property $BP_3$ for $F$. Then $U_m:=\bigcup_{n\in\bN}K_{m,n}$ for $m>0$ and $U_0:=\int(X\setminus (F\cup\bigcup_{m>0}U_m))$ satisfy the conditions (i')--(iii').
\end{proof}

\begin{example}
	$\bR^k$ with the density topology has the property $BP_3'$, see \cite[Lemme]{ZG} if $k=1$, and \cite[Lemma 2]{GNS} for any $k\in\bN$.
\end{example}

\begin{problem}
	Does $\bR^k$ with the density topology have the property $BP_3$?
\end{problem}

\begin{definition}\mbox{(\cite{BHH}, see also \cite{LH}.)}
A topological space $X$ has  property $CP$ ($QP$) if for every nonempty
nowhere dense closed set $F\subset X$ there is a continuous (quasi-continuous)  function $g\colon X\setminus F\to [0,1]$ such that $\osc_g(x)=1$ for every $x\in F$, where $\osc_g(x)$
is the oscilation of $g$ at $x$.
\end{definition}

\begin{corollary}
$$BP_3' \Longrightarrow QP.$$
\end{corollary}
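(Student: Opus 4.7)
The plan is to construct $g$ explicitly using the open sets supplied by $BP_3'$. Given a nonempty closed nowhere dense $F \subset X$, I would first apply $BP_3'$ to obtain pairwise disjoint nonempty open sets $(U_m)_{m \in \bN}$ satisfying (i')--(iii'), and then define $g \colon X \setminus F \to [0,1]$ by setting $g \equiv 1$ on every $U_m$ with odd $m \geq 1$ and $g \equiv 0$ everywhere else on $X \setminus F$; in particular $g \equiv 0$ on $U_0$, on every $U_m$ with even $m \geq 2$, and on the exterior set $X \setminus \bigl(F \cup \bigcup_{m \geq 0} U_m\bigr)$. Note that (i') guarantees every $U_m \subset X \setminus F = \dom(g)$.

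To check the oscillation condition at $F$, I would fix $x \in F$ and an arbitrary neighbourhood $W$ of $x$. By (ii'), both $W \cap U_1$ and $W \cap U_2$ are nonempty, so $W \cap \dom(g)$ contains points on which $g$ takes the values $1$ and $0$ respectively. This forces $\osc_g(x) \geq 1$, and the reverse inequality is automatic since $g$ is $[0,1]$-valued.

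For quasi-continuity at an arbitrary $x \in X \setminus F$ I would split into two cases. If $x \in U_m$ for some $m \geq 0$, then $U_m$ is an open neighbourhood of $x$ on which $g$ is constant, so $g$ is continuous, hence quasi-continuous, at $x$. Otherwise $x$ lies in the exterior set $X \setminus \bigl(F \cup \bigcup_{m \geq 0} U_m\bigr)$, which by (iii') is contained in $\cl(U_0)$. In this case $g(x) = 0$, and for any neighbourhood $W$ of $x$ in $X \setminus F$ and any neighbourhood $V$ of $0$ in $[0,1]$, the set $W_0 := W \cap U_0$ is a nonempty open subset of $W$ on which $g \equiv 0 \in V$, verifying quasi-continuity at $x$.

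The one place I expect to have to be careful is exactly this last case, and it is precisely the reason I assign the same value $0$ to both $U_0$ and the exterior set: condition (iii') guarantees that $U_0$ accumulates on every point of the exterior, so a single open witness $W \cap U_0$ suffices. A more elaborate value scheme, or one distinguishing $U_0$ from the exterior, would force one to reconcile multiple value sources near each exterior point and make the quasi-continuity check delicate, whereas the two-value assignment tailored to (iii') avoids the issue entirely.
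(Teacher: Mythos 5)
Your proposal is correct and is essentially the paper's own argument: the paper simply takes $g$ to be the characteristic function of $U_1$ on $X\setminus F$, and your choice of the union of the odd-indexed $U_m$'s is an inessential variation that is verified in exactly the same way (using (ii') for the oscillation at points of $F$ and (iii') for quasi-continuity on the exterior set).
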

\begin{proof}
Let $F$ be a non-empty closed and nowhere dense set in $X$ and let $\{ U_m\colon m\in\bN\}$ be a family witnessing the property $BP_3'$. Let $g\colon X\setminus F\to [0,1]$ be the characteristic function of the set $U_1$, then $g$ is as we need (cf \cite[Theorem 7]{BHH}).
\end{proof}

Bors\'{i}k uses the property $BP_2$ to show that if $X$ is a pseudo-metrizable topological space then every cliquish function $f\colon X\to\bR$ can be represented as the sum of three quasi-continuous functions \cite[Theorem 1]{JB}. Following this result, an alternative proof of Theorem 3.4 from a recent paper \cite{LH} can easily be obtained.

\begin{proposition}\mbox{\cite{LH}}\label{p2}
Let $X$  be a separable metrizable space in which there is a
closed nowhere dense set $F\subset X$ with the cardinality $\co$. Then 
the family quasi-continuos non Borel measurable functions from $X$ to $[0,1]$ has cardinality
$2^\co$.
\end{proposition}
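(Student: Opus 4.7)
The plan is to follow the Borsík route suggested just above the statement: produce $2^{\co}$ distinct cliquish test functions on $X$ and invoke Borsík's decomposition theorem \cite[Theorem 1]{JB}, which says that on a pseudometrizable space (and a separable metric $X$ certainly is one) every cliquish real-valued function is a sum of three quasi-continuous functions. Since there are only $\co$ Borel measurable real-valued functions on a separable metric space, a simple pigeonhole will force most of the decomposition pieces to be non-Borel quasi-continuous.

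I would construct the test bed as follows. For each $g\colon F\to[0,1]$ set $f_g=g$ on $F$ and $f_g=0$ on $X\setminus F$. Since $F$ is closed and nowhere dense, $X\setminus F$ is a dense open set on which $f_g$ is continuously zero, so every neighbourhood of any $x\in X$ contains a non-empty open subset of $X\setminus F$ on which $\osc_{f_g}=0$; hence $f_g\in\Cliq(X,\bR)$. Distinct $g$'s give distinct $f_g$'s, producing $|[0,1]^F|=\co^{\co}=2^{\co}$ cliquish functions. Applying Borsík, for each $g$ fix a decomposition $f_g=q_1^g+q_2^g+q_3^g$ with $q_i^g\in\QC(X,\bR)$. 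The assignment $g\mapsto(q_1^g,q_2^g,q_3^g)$ is injective (the triple determines its sum $f_g$, which in turn determines $g$), so $|\QC(X,\bR)|^3\ge 2^{\co}$ and hence $|\QC(X,\bR)|\ge 2^{\co}$.

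Now the counting: since $X$ is separable metrizable, the Borel $\sigma$-algebra on $X$ has at most $\co$ members, so there are at most $\co^{\aleph_0}=\co$ Borel measurable maps $X\to\bR$. Combined with $|\QC(X,\bR)|\ge 2^{\co}$ this gives $|\QC(X,\bR)\setminus\Borel(X,\bR)|\ge 2^{\co}$. To move from codomain $\bR$ to $[0,1]$, fix any homeomorphism $h\colon\bR\to(0,1)$. Then $q\mapsto h\circ q$ injects the non-Borel elements of $\QC(X,\bR)$ into the non-Borel elements of $\QC(X,[0,1])$: composition with a continuous map preserves quasi-continuity, and composition with a Borel isomorphism preserves non-Borelness. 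The trivial upper bound $|[0,1]^X|=2^{\co}$ gives the matching inequality.

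The substantive content is entirely concentrated in Borsík's decomposition theorem, which is used as a black box; given that, the argument is bookkeeping. The step most worth double-checking is the final reduction from $\bR$ to $[0,1]$ via post-composition with a homeomorphism, but both the preservation of quasi-continuity and of non-Borelness follow directly from the definitions.
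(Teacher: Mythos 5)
Your proof is correct and follows essentially the same route as the paper: build a family of $2^{\co}$ cliquish functions supported on $F$, apply Bors\'{i}k's decomposition theorem to force $2^{\co}$ quasi-continuous functions, and subtract the at most $\co$ Borel ones (the paper phrases this as a proof by contradiction, but the counting is identical). If anything, your explicit transfer from codomain $\bR$ to $[0,1]$ via a homeomorphism onto $(0,1)$ is slightly more careful than the paper, which silently counts the Bors\'{i}k summands as if they were $[0,1]$-valued.
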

\begin{proof}
Let $\kappa$ be the size of the family of non-Borel $\QC$ functions from $X$ into $[0,1]$, and let $\lambda$ be the size of the family of all $\QC$ functions from $X$ to $[0,1]$.
Suppose that $\kappa<2^{\co}$. Since the family of all Borel functions from $X$ to $[0,1]$ has size $\le\co$, so $\lambda\le \co+\kappa<2^\co$. By Bors\'{i}k Theorem \cite[Theorem 1]{JB}, the family of all cliquish functions has size $\le \lambda^3<2^\co$. On the other hand, for every $E\subset F$, the characteristic function $\chi_E$ is cliquish, hence the family of all cliquish functions has cardinality at least  $2^\co$, a contradiction. On the other hand, if $X$ satisfies the assumptions of Proposition \ref{p2} then $X$ has cardinality $\le\co$, hence the family of all functions from $X$ to $[0,1]$ is of size $\le 2^\co$. Therefore the family of all quasi-continuous non-Borel functions from $X$ to $[0,1]$ has cardinality equal to $2^\co$.
\end{proof}

\subsection{Ideas of lineability}
During the last two decades many mathematicians have been looking at the largeness of some sets  by constructing inside them large algebraic structures, like groups, linear spaces, algebras etc.
 This approach is called {\it lineability} or {\it algebrability}. This topic was added recently to the AMS subject classification under 15A03 and 46B87. A comprehensive
 description of this concept as well as numerous examples and some general techniques can be found in the monograph \cite{ABPS} or survey article \cite{BPS}.
 
 The following terminology has been used in several results of lineability, see \cite{ABPS,BPS}. Let $\kappa$ be a cardinal number.
 \begin{itemize}
 	\item Let $V$ be a vector space over the field $\bK$ and $A\subset V$. We say that $A$ is {\it $\kappa$-lineable} if $A\cup\{ 0\}$ contains a $\kappa$-dimensional subspace of $V$.
 	\item Let $V$ be a vector space over $\bR$ and $A\subset V$. We say that $A$ is {\it positively (respectively, negatively) $\kappa$-coneable} if there is $B\subset A$ with $\card(B)=\kappa$ of linearly independent vectors in $V$ such that the positive (respectively, negative) cone generated by $B$ is contained in $A$, i.e., 
 	$$\bigcup_{n\in\bN}\left\{ \sum_{i=1}^na_ix_i\colon a_1,\ldots, a_n>0 \;\;\;(\mathrm{resp. }\;\; <0)\;\;\; \mathrm{ and }\;\;\; x_1\ldots, x_n\in B \right\}\subset A.$$
 	$A$ is {\it $\kappa$-coneable} if it is positively and negatively $\kappa$-coneable.
 	\item
 	Let $\mathfrak{A}$ be an algebra over the field $\bK\in\{ \bR,\bC\}$ and $A\subset \mathfrak{A}$.  We say that $A$ is {\it $\kappa$-algebrable} if $A\cup\{ 0\}$ contains
 	a $\kappa$-generated  subalgebra $\mathfrak{B}$ of $\mathfrak{A}$.
 	\item
 	Let $\mathfrak{A}$ be an algebra over the field $\bK\in\{ \bR,\bC\}$ and $A\subset \mathfrak{A}$.  We say that $A$ is {\it strongly $\kappa$-algebrable} if $A\cup\{ 0\}$ contains
 	a $\kappa$-generated  free subalgebra $\mathfrak{B}$.
 \end{itemize}
Recall that a set $X$ contained in  an algebra $\mathfrak{A}$ generates a free subalgebra $\mathfrak{B}$ whenever:
\begin{enumerate}
	\item the algebra $\mathfrak{B}$ is generated by $X$;
	\item for any polynomial $P$ of degree $n$ and without a constant term, and for any pairwise different $x_1,\ldots, x_n\in X$ we have $P(x_1,\ldots,x_n)=0 \Leftrightarrow P=0$. 
\end{enumerate}
 If $X$ satisfies conditions (1),(2) then we say that vectors in $X$ are {\it algebraically independent} and that $X$ is a {\it set of free generators} of $\mathfrak{B}$.

\section{Results}
\begin{proposition}\label{p3}
Let $X$ be a topological space with the property $BP_3'$ and let $F_0,F_1$ be any disjoint nowhere dense subsets of $X$. Then for $Y=\{ 0,1\}$ there is a super quasi-continuous function $g\colon X\to Y$ such that $g(x)=i$ for $x\in F_i$, $i=0,1$. 
\end{proposition}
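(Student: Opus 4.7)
The plan is to apply the property $BP_3'$ to the nowhere dense set $F:=F_0\cup F_1$ (nowhere denseness of a finite union is standard). This produces a pairwise disjoint sequence $(U_m)_{m\in\bN}$ of non-empty open sets satisfying (i')--(iii') of Definition~\ref{bp3'} relative to $F$.

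I would then define the $\{0,1\}$-valued function $g:=\chi_{F_1\cup U_1}$. From (i') we have $U_1\cap F=\emptyset$, so $U_1$ is disjoint from $F_0$, and by hypothesis $F_0\cap F_1=\emptyset$; hence $g$ vanishes on $F_0$ and equals $1$ on $F_1$, as required. Moreover, because $U_1$ is open and each other $U_m$ is open and disjoint from $F\cup U_1$, the function $g$ is locally constant on the open set $\bigcup_{m\in\bN}U_m$, so this union lies inside $C_g$.

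To verify super quasi-continuity, fix $x_0\in X$ and a neighbourhood $W$ of $x_0$; since $Y=\{0,1\}$ is discrete, I must find $y\in C_g\cap W$ with $g(y)=g(x_0)$. For $x_0\in\bigcup_m U_m$ take $y=x_0$. For $x_0\in F_1$ condition (ii') yields $x_0\in\cl(U_1)$, so $W\cap U_1\neq\emptyset$, and any point of this intersection is a continuity point with value $1=g(x_0)$. The case $x_0\in F_0$ is symmetric, using $x_0\in\cl(U_0)$ to locate a continuity point of value $0$. Finally, for $x_0\in X\setminus(F\cup\bigcup_m U_m)$ one has $g(x_0)=0$ and, by (iii'), $x_0\in\cl(U_0)$, so again $W\cap U_0$ supplies the desired $y$.

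The argument has no genuinely hard step: conditions (i')--(iii') are tailored precisely to produce this two-valued witness. The only subtlety worth noting is the last case, where points of the ``leftover'' set $X\setminus(F\cup\bigcup_m U_m)$ need a separate check, and this is handled by (iii'), which forces them into $\cl(U_0)$.
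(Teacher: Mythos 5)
Your proof is correct and follows essentially the same route as the paper: apply $BP_3'$ to (the closure of) $F_0\cup F_1$ and take the characteristic function of $F_1$ together with the first witnessing set, the only cosmetic difference being that the paper uses $F_1\cup(\cl(U_1)\setminus F)$ where you use $F_1\cup U_1$ (both work). Your version also spells out the case analysis that the paper dismisses with ``clearly''.
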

\begin{proof}
Let $F=\cl(F_0\cup F_1)$ and let  let $\{ U_m\colon m\in\bN\}$ be a family witnessing the property $BP_3'$ for $F$. Define $g\colon X\to Y$ by the formula
$$
g(x)=\left\{\begin{array}{cl}
	1 & \text{for } x\in F_1\cup (\cl(U_1)\setminus F);\\
	0 & \text{for other } x.
\end{array}\right.
$$
Clearly $g$ satisfies the assertion of the proposition.
\end{proof} 

\begin{theorem}\label{t1}
Let $X$ be a second countable Hausdorff topological space with the property $BP_3'$ in which there is a
closed nowhere dense set $F\subset X$ with the cardinality $\co$. Then the family of all
$\SQC$ non Borel measurable functions from $X$ to $\{ 0,1\}$ has cardinality $2^\co$.	
\end{theorem}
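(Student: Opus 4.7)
The plan is to use Proposition~\ref{p3} as a ``bit-setting'' tool: for each subset $A\subset F$ I will manufacture a super quasi-continuous function $g_A\colon X\to\{0,1\}$ that codes $A$ on $F$, and then argue that only few of the resulting $2^{\co}$ many functions can be Borel. This parallels the cardinality counting already used in the proof of Proposition~\ref{p2}, but now exploits the $\SQC$ dichotomy coming from $BP_3'$ instead of Bors\'{i}k's theorem on cliquish decompositions.

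First, I fix a closed nowhere dense set $F\subset X$ with $|F|=\co$. For each $A\subset F$, the sets $F_1:=A$ and $F_0:=F\setminus A$ are disjoint subsets of $F$, hence disjoint nowhere dense subsets of $X$. Proposition~\ref{p3} applied to $F_0,F_1$ then yields a super quasi-continuous function $g_A\colon X\to\{0,1\}$ with $g_A(x)=1$ for $x\in A$ and $g_A(x)=0$ for $x\in F\setminus A$. In particular $g_A\restr F=\chi_A$, so distinct $A,A'\subset F$ produce distinct $g_A,g_{A'}$ (they disagree on any point of $A\triangle A'$). Since $|F|=\co$, this gives a family
\[
\{g_A : A\subset F\}\subset \SQC(X,\{0,1\})
\]
of cardinality $2^{\co}$.

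Next, I count the Borel-measurable functions from $X$ to $\{0,1\}$. Because $X$ is second countable Hausdorff, its Borel $\sigma$-algebra is generated by a countable base and therefore has cardinality at most $\co$ (this is the standard transfinite construction of the Borel hierarchy along $\omega_1$). A Borel measurable function $g\colon X\to\{0,1\}$ is determined by the Borel set $g^{-1}(\{1\})$, so there are at most $\co$ Borel measurable functions from $X$ to $\{0,1\}$. Consequently, at most $\co$ of the functions $g_A$ are Borel, hence at least $2^{\co}-\co=2^{\co}$ of them are non-Borel measurable. Since trivially there are at most $2^{|X|}\le 2^{\co}$ functions from $X$ to $\{0,1\}$ (note $|X|\le\co$ follows from $X$ being second countable Hausdorff), this bound is tight.

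The work is really carried by Proposition~\ref{p3}; the remaining content is essentially a counting argument. The only step that might require a word of justification is the cardinality estimate $|\Borel(X)|\le\co$ for second countable Hausdorff $X$, but this is classical. Apart from that, nothing subtle is going on — no choice needs to be made consistently across different $A$, since the functions $g_A$ are produced independently.
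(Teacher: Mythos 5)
Your proof is correct and follows essentially the same route as the paper: both rely on Proposition~\ref{p3} to extend characteristic functions of subsets of $F$ to $\SQC(X,\{0,1\})$ functions and then count, using that a second countable Hausdorff space has at most $\co$ points and at most $\co$ Borel sets. The only cosmetic difference is that the paper indexes directly by the $2^{\co}$ non-Borel subsets of $F$ (whose extensions are automatically non-Borel), whereas you index by all subsets and discard the at most $\co$ Borel outcomes; the two countings are interchangeable.
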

\begin{proof}
Since $X$ is Hausdorff second countable, it has the cardinality $\le\co$ and the family of all Borel sets in $X$ has the cardinality $\le\co$. Since $\card(X)\ge\card(F)=\co$, we have $\card(X)=\co$.  Hence the family of all functions from $X$ to $\{ 0,1\}$ has cardinality $2^\co$, so the family
of $\SQC$ non Borel measurable functions from $X$ to $\{ 0,1\}$ has cardinality $\le 2^\co$. On the other hand, the family of all non-Borel subsets of $F$ has the cardinality $2^\co$ and,  by Proposition \ref{p3}, for every non-Borel subset $F_0\subset F$, the characteristic function of $F_0$ resticted to $F$ can be extended to a super quasi-continuous function from $X$ to $\{ 0,1\}$. 
Hence the family of all
$\SQC$ non Borel measurable functions from $X$ to $\{ 0,1\}$ has cardinality $\ge 2^\co$.
\end{proof}

 In proofs of next theorems we will  use the so-called method of {\it independent Bernstein sets}, see e.g. \cite{BBG}. For a non-empty set $X$ and $D\subset X$ denote $D^1=D$ and $D^0=X\setminus D$. We say that a family $\mathcal{D}$ of subsets of $X$ is {\it independent} if 
for every $\delta\colon \{ 1,\ldots,n\}\to\{ 0,1\}$ and any pairwise distinct sets $D_1,\ldots,D_n\in\mathcal{D}$ we have $D_1^{\delta(1)}\cap\ldots\cap D_n^{\delta(n)}\neq\emptyset$. Recall that for every infinite cardinal $\kappa$ there is an independent family  of $2^\kappa$ subsets of $\kappa$ \cite{FK}, see also \cite[Lemma 7.7]{Jech}.

\begin{theorem}\label{t2}
Let $X$ be a topological space with the property $BP_3'$ in which there is a 
closed nowhere dense set $F\subset X$ which is homeomorphic with the Cantor set. Then 

\begin{enumerate}
	\item There exists an additive group of size $\co$ contained in the family of non-Borel $\SQC(X,\bZ)$ functions plus the zero function.
	\item Let $\mathbb{K}\in\{ \bQ,\bR,\bC\}$. There is a $\co$-dimensional linear space (over $\mathbb{K}$) contained in the family of non-Borel $\SQC(X,\bK)$  plus the zero function.
	\item
	There exists a cone contained in the family of non-Borel $\SQC(X,\bR)$  plus the zero function that is spaned by $2^\co$ linearly independent generators. 
	\item There is a lattice of size $2^\co$ contained in the family of non-Borel $\SQC(X,\{ 0,1\})$ functions.
\end{enumerate}
\end{theorem}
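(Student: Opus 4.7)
The strategy is to build one flexible family of super quasi-continuous functions parametrized by pairs $(A,M)$ with $A\subseteq F$ and $M\subseteq \bN\setminus\{0\}$, from which all four algebraic structures can be extracted. Fix $\{U_m : m \in \bN\}$ witnessing $BP_3'$ for $F$. For each such pair let
\[
g_{A,M}(x) = \begin{cases} 1 & \text{if } x \in A \cup \bigcup_{m\in M} U_m, \\ 0 & \text{otherwise.} \end{cases}
\]
A direct adaptation of the proof of Proposition~\ref{p3} (with $U_1$ replaced by the whole $\bigcup_{m\in M}U_m$ and $U_0$ playing the role of a value-$0$ reservoir that handles the set $X\setminus F\setminus \bigcup_m U_m\subseteq \cl(U_0)$) shows $g_{A,M}\in \SQC(X,\{0,1\})$. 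The parametrization is algebra-friendly: $g_{A,M}\vee g_{A',M'}=g_{A\cup A',\, M\cup M'}$, $g_{A,M}\wedge g_{A',M'}=g_{A\cap A',\, M\cap M'}$, and $g_{A,M}$ is Borel iff $A$ is Borel (the off-$F$ part is always Borel).

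Now fix two classical independent families: $\{M_\xi:\xi<2^\co\}\subseteq \mathcal{P}(\bN\setminus\{0\})$ (Fichtenholz--Kantorovich, with each $M_\xi$ infinite and coinfinite), and, inside $F\cong 2^\omega$, an independent family $\{B_\xi:\xi<2^\co\}$ of Bernstein subsets of $F$ chosen so that every non-trivial finite Boolean combination is again a Bernstein set of $F$ (hence non-Borel in $X$); such ``independent Bernstein families'' are standard (cf.\ \cite{BBG}). Set $f_\xi:=g_{B_\xi,M_\xi}$. I will prove one master lemma: for pairwise distinct $\xi_1,\ldots,\xi_n$ and nonzero scalars $\alpha_1,\ldots,\alpha_n$ in whatever coefficient ring is at hand, $h:=\sum_i \alpha_i f_{\xi_i}$ lies in $\SQC(X,\bK)$ and is non-Borel. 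For $\SQC$ at $x\in F$, put $T(x):=\{i:x\in B_{\xi_i}\}$ so that $h(x)=\sum_{i\in T(x)}\alpha_i$; by independence of $\{M_\xi\}$ the set $\bigcap_{i\in T(x)}M_{\xi_i}\cap \bigcap_{i\notin T(x)}(\bN\setminus M_{\xi_i})$ is infinite, and since $F\subseteq \cl(U_m)$ for every $m$, every neighborhood of $x$ contains $U_m$'s (open, and hence made up of continuity points of $h$) on which $h$ is constantly equal to $h(x)$. For non-Borel-ness, $h^{-1}(h(x))\cap F$ contains the Bernstein combination $\bigcap_{i\in T(x)}B_{\xi_i}\cap \bigcap_{i\notin T(x)}(F\setminus B_{\xi_i})$, hence is non-Borel, while the off-$F$ part of $h^{-1}(h(x))$ is Borel.

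With the master lemma in hand the four parts are short packaging. For (1) restrict to $\co$ pairwise disjoint members of the Bernstein family and form the free $\bZ$-module $\bigoplus_{\xi<\co}\bZ f_\xi$: an additive group of cardinality $\co$ of non-Borel $\SQC(X,\bZ)$ functions plus zero. For (2) do the same with $\bK$-coefficients to get a $\co$-dimensional $\bK$-linear space. For (3) use the full family $\{f_\xi:\xi<2^\co\}$; these are $\bR$-linearly independent because by Bernstein independence one can pick $x\in B_{\xi_i}\setminus \bigcup_{j\ne i}B_{\xi_j}$ which isolates the coefficient at $\xi_i$, and the master lemma shows every strictly positive (respectively negative) real combination is non-Borel $\SQC$, yielding the required $2^\co$-generator cone. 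For (4) the family $\{f_\xi:\xi<2^\co\}$ generates under pointwise $\vee,\wedge$ a distributive sublattice of $\SQC(X,\{0,1\})$: each element has the form $g_{A,M}$ with $A$ a union of intersections of $B_\xi$'s (still Bernstein, hence non-Borel) and $M$ the corresponding combination of $M_\xi$'s (still coinfinite in $\bN\setminus\{0\}$, which is all that is needed for $\SQC$). This distributive lattice on $2^\co$ generators has cardinality $(2^\co)^{<\omega}=2^\co$.

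The main obstacle is the joint \emph{two-sided independence}. On the set-theoretic side one needs Bernstein sets whose Boolean combinations all remain non-Borel so that the non-Borel conclusion is robust under sums and lattice operations; on the combinatorial side one needs Fichtenholz--Kantorovich independent subsets of $\bN$ so that every possible value a combination $h$ can take on $F$ is mirrored on infinitely many of the $U_m$'s approaching every point of $F$, which is precisely what forces $\SQC$ for arbitrary combinations. Either side is classical in isolation; marrying them into a single definition $f_\xi=g_{B_\xi,M_\xi}$ that feeds all four algebraic settings uniformly is the technical core of the argument.
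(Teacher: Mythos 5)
Your construction for parts (1) and (2) is essentially the paper's (characteristic functions of $B_\xi\cup\bigcup_{m\in M_\xi}U_m$ with independent Bernstein sets on the $F$-side and an independent family of infinite subsets of $\bN\setminus\{0\}$ on the $U_m$-side), and your master lemma is correct for those parts. The fatal problem is the family $\{M_\xi:\xi<2^\co\}$: an independent family consists of pairwise distinct sets, and $|\powerset{\bN}|=\co<2^\co$, so no independent family of $2^\co$ subsets of $\bN$ exists. Fichtenholz--Kantorovich gives $2^\kappa$ independent subsets of a set of size $\kappa$; applied to $\bN$ this yields only $\co$ of them (to get $2^\co$ independent sets you must work inside a set of size $\co$, which is exactly why the paper puts its size-$2^\co$ independent family $\{\Delta_\alpha\}$ inside $\powerset{\co}$ and transfers it to Bernstein sets $D_\alpha\subset F$, while keeping the $\bN$-side family of size only $\co$). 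Since your master lemma needs $\bigcap_{i\in T(x)}M_{\xi_i}\cap\bigcap_{i\notin T(x)}(\bN\setminus M_{\xi_i})\neq\emptyset$ for arbitrary finite subfamilies, your argument collapses precisely in parts (3) and (4), the two parts that require $2^\co$ generators.

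Part (4) is easily repaired without any $\bN$-side independence: because the functions are $\{0,1\}$-valued, it suffices to realize the value $1$ on $U_m$ for $m$ in one fixed infinite $M\subseteq\bN\setminus\{0\}$ and the value $0$ on $U_0$; this is what the paper does (all $g_\alpha$ use the same $N_0$), and then $\sup$ and $\inf$ of finitely many generators is again $\chi_{A\cup\bigcup_{m\in M}U_m}$ with $A$ a Bernstein set. Part (3) is where the real difficulty sits and where your approach cannot be patched: for a positive combination $g=\sum_{i=1}^n a_i f_{\xi_i}$ and a point $x\in F$ lying in a proper nonempty Boolean combination of the $B_{\xi_i}$, one has $g(x)=\sum_{i\in S}a_i$ strictly between $0$ and $\sum_i a_i$, and super quasi-continuity forces this intermediate value to be attained (approximately) at continuity points of $g$ near $x$; with generators constant on each $U_m$ this forces the traces $\{m: f_\xi\restr U_m=1\}$ to be independent in $\powerset{\bN}$, which caps the number of generators at $\co$. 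So a genuinely different device is needed for the cone. (Be aware that the paper's own verification of (3) is in tension with itself on exactly this point: it defines every $g_\alpha$ with the same index set $N_0$ on the $U_n$'s, yet the super quasi-continuity check chooses $n\in N_0\cap\bigcap_{i\in I_x}N_i\setminus\bigcup_{i\notin I_x}N_i$ as if the traces varied with the generator. Your difficulty is therefore a real one, but in your write-up it surfaces as a concrete false statement rather than as an acknowledged obstacle.)
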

\begin{proof}
Divide $F$ onto $\co$-many pairwise disjoint Bernstein sets in $F$,  $F_\alpha$, $\alpha<\co$. Let $\{ U_n\colon n\ge 0\}$ be a family witnessing the property $BP_3'$ for $F$.
Next, let 
\begin{itemize}
	\item 
$\{  N_\alpha\colon \alpha <\co\}$ be an independent family of infinite subsets of $\bN\setminus\{ 0\}$; 
\item
 $\{ \Delta_\alpha\colon \alpha<2^\co\}$ be an independent family of subsets of $\co$, and 
 \item
 for each $\alpha<2^\co$ let $D_\alpha:=\bigcup_{\xi\in\Delta_\alpha} F_\xi$.
 \end{itemize}
Notice that for all $\alpha,\beta<2^\co$ the sets $D_\alpha$, $D_\alpha\cup D_\beta$ and $D_\alpha\cap D_\beta$  are Bernstein sets in $F$. For each $\alpha<\co$ 
let $f_\alpha\colon X\to\bZ$ be the characteristic function of the set $D_\alpha\cup\bigcup\{U_n \colon n\in N_\alpha\}$:
$$
f_\alpha(x)=\left\{\begin{array}{cl}
	1 & \text{for } x\in D_\alpha;\\
	1 & \text{for } x\in U_n, n\in N_{\alpha};\\
	0 & \text{for other } x.
\end{array}\right.
$$
Then the restriction $f_\alpha\restr F$ is equal to the characteristic function of $D_\alpha$, hence it is non-Borel and therefore $f_\alpha$ is non-Borel. We will verify that $f_\alpha$ is super quasi-continuous at each $x\in X$. 
\begin{itemize}
	\item 
Since $D_\alpha\subset\cl(U_n)$ and $f_\alpha$ is constant  on $D_\alpha\cup U_n$  for any $n\in N_\alpha$, $f_\alpha$ is super quasi-continuous at each $x\in D_\alpha$. 
\item
Similarly,  $F\setminus D_\alpha\subset \cl(U_0)$ and $f_\alpha(t)=0$ for  $t\in (F\setminus D_\alpha)\cup U_0$, hence $f_\alpha$ is super quasi-continuous at each $x\in F\setminus D_\alpha$. 
\item
If $x\not\in F$ then either $x\in U_n$ for some $n\in\bN$ and then $f_\alpha$ is continuous at $x$, or $x\in X\setminus\bigcup_{n=1}^\infty U_n$ and then $x\in\cl(U_0)$ and $f_\alpha$ is constant on $\{ x\}\cup U_0$, hence it
is super quasi-continuous at $x$.
\end{itemize}

(1) Let $\mathcal{G}$ be the additive group generated by the family $\{ f_\alpha\colon \alpha<\co\}$. Clearly $\mathcal{G}$ is of size $\co$. We will show that every non-zero function $f\in\mathcal{G}$ is non-Borel and  super quasi-continuous. Fix $f\in\mathcal{G}\setminus\{ 0\}$. Then $f=\sum_{i=1}^ma_i f_{\alpha_i}$, where $m\in\bN$, $\alpha_i<\co$, and $a_i\in\bZ\setminus\{ 0\}$. Observe that $f\restr F$ is non-Borel. In fact, there are $\xi_0\in \Delta_{\alpha_1}\setminus\bigcup_{i=2}^m\Delta_{\alpha_i}$ and $\xi_1\in\co\setminus\bigcup_{i=1}^m\Delta_{\alpha_i}$. Then $f\restr F_{\xi_0}=a_1\neq 0$ and $f\restr F_{\xi_1}=0$.
Since $F_{\xi_0}$ and $F_{\xi_1}$ are disjoint Bernstein sets in $F$, so $f\restr F$ is non-Borel and consequently, $f$ is non-Borel.

We will verify that $f\in\SQC(X,\bZ)$. 
Fix $x_0\in X$ and consider a few cases. 

\begin{itemize}
	\item 
If $x_0\in D_{\alpha_i}$ for some $i\le m$ then  choose $\xi\in \Delta_{\alpha_i}\setminus \bigcup\{ \Delta_{\alpha_j}\colon 1\le j\le m, j\ne i\}$ and observe that for $n\in N_\xi$ we have $x_0\in\cl(U_n)$ and $f(x_0)=a_i=f(t)$ for $t\in U_n$, so $f$ is super  quasi-continuous at $x_0$. 
\item
If $x_0\in F\setminus \bigcup_{i\le m}D_{\alpha_i}$ then 
observe that $x_0\in\cl(U_0)$ and $f(x_0)=0=f(t)$ for $t\in U_0$, so $f$ is super quasi-continuous at $x_0$. 
\item
If $x_0\in U_n$ for some $n$, then since $f$ is constant on $U_n$, $f$ is super quasi-continuous at $x_0$. 
\item
Finally, assume that $x_0\not\in F\cup\bigcup_{n\in\bN}U_n$. Then 
$x_0\in\cl(U_0)$ and $f(x_0)=0=f(t)$ for $t\in U_0\subset C(f)$, hence $f$ is super quasi-continuous at $x_0$.
\end{itemize}
In conclusion, $f$ is super quasi-continuous at each point $x\in X$.

(2) Let $\mathbb{K}\in\{ \bQ,\bR,\bC\}$ and let $V$ be a linear space over $\mathbb{K}$ generated by the family $\{ f_\alpha\colon \alpha<\co\}$. 
 We will verify that $f_\alpha$'s are linearly independent. Assume that $\sum_{i=1}^ma_i f_{\alpha_i}=0$, where $a_i\in \mathbb{K}$ and $\alpha_i<\co$ are pairwise different. For every $i\le m$ choose $\xi\in \Delta_{\alpha_i}\setminus\bigcup_{j\ne i}\Delta_{\alpha_i}$. Then for each  $x\in F_{\xi}$ we have $\sum_{i=1}^ma_i f_{\alpha_i}(x)=a_i$, hence $a_i=0$. Finally let $f$ be a non-zero linear combination of $f_\alpha$'s. Then $f=\sum_{i=1}^n a_i f_{\alpha_i}$, where $a_i\in \mathbb{K}\setminus\{ 0\}$ and $\alpha_i<\co$ are pairwise different.
One can verify, as in the point (1), that $f$ is non-Borel and $\SQC$.

(3) 
For each $0<\alpha<2^\co$ let $g_\alpha\colon X\to \{0,1\}$ be defined as
$$
g_\alpha(x)=\left\{\begin{array}{cl}
	1 & \text{for } x\in D_\alpha;\\
	1 & \text{for } x\in U_n, n\in N_{0};\\
	0 & \text{for other } x.
\end{array}\right.
$$
First, observe that $g_\alpha$'s are non-Borel. In fact, $g_\alpha(x)=1$ for $x\in D_\alpha$ and $g_\alpha(x)=0$ for $x\in F\setminus D_\alpha$. Since $D_\alpha$ is non-Borel, $g_\alpha\restr F$ is non-Borel and consequently, $g_\alpha$ is non-Borel, too. Next, we will see that $g_\alpha\in\SQC$. Fix $x\in X$ and consider a few cases. 
\begin{itemize}
	\item 
 Let $x\in D_\alpha$. Fix any $n\in N_{0}$ and observe that $x\in\cl(U_n)$ and $g_\alpha(x)=1=g(t)$ for $t\in U_n$.  Since $U_n$ is open, $g_\alpha$ is super quasi-continuous at $x$.
\item
Let $x\in U_n$ for some $n$. Since $U_n$ is open and $g_\alpha$ is constant on $U_n$, so $g_\alpha$ is $\SQC$ at $x$.
\item
If $x\in (F\setminus D_\alpha)\cup [X\setminus (F\cup\bigcup_{n\ge 0}U_n)]$, then $x\in\cl(U_0)$ and $g_\alpha(x)=0=g_\alpha(t)$ for $t\in U_0$, hence $g_\alpha$ is super quasi-continuous at $x$.
\end{itemize}

Now, let $g=\sum_{i=1}^n a_ig_{\alpha_i}$,  where $a_i\in \mathbb{R}$, $a_i>0$,  and $0<\alpha_1,\ldots,\alpha_n<2^\co$ are pairwise different.
Let $\xi\in \bigcap_{i=1}^n\Delta_{\alpha_i}$ and $\eta\in\co\setminus\bigcup_{i=1}^n\Delta_{\alpha_i}$, then $g(x)=\sum_{i=1}^na_i>0$ for $x\in F_\xi$ and $g(x)=0$ for $x\in F_\eta$, hence $g\restr F$ is non-Borel, and consequently $g$ is non-Borel, too. To see that $g\in\SQC$  fix $x\in X$ and consider a few cases. 

\begin{itemize}
	\item 
First, assume that $x\in F$. Let $I_x=\{ i\le n\colon x\in D_{\alpha_i}\}$. We will conside two subcases.
\begin{itemize}
	\item 
If $I_x\ne\emptyset$ then, since the family $\{ N_\alpha\colon\alpha<\co\}$ is independent, we can  choose  $n\in N_0\cap \bigcap_{i\in I_x}N_i\setminus\bigcup_{i\le n, i\not\in I_x}N_i$. 
Then 
$$
g_{\alpha_i}(x)=\left\{\begin{array}{cl}
	1 & \text{for } i\in I_x;\\
	0 & \text{for } i\in \{i\le n, i\not\in I_x\},
\end{array}\right.
$$
and $g_{\alpha_i}(t)=g_{\alpha_i}(x)$ for $t\in U_n$ and $i\le n$.
Hence $g(x)=\sum_{i=1}^n a_ig_{\alpha_i}(x)=\sum_{i\in I_x}a_i=\sum_{i\le n}a_ig_{\alpha_i}(t)=g(t)$ for $t\in U_n$, so
 $g$ is constant on the set $\{ x\}\cup U_n$. Thus $g$ is $\SQC$ at $x$.
 \item
If $x\in F$ and $I_x=\emptyset$ then $g(x)=0$. Since $g(t)=0$ for $t\in U_0$ and $x_0\in\cl(U_0)$, $g$ is $\SQC$ at $x$. 
\end{itemize}
\item
Finally, assume that
 $x\not\in F$.  Then either $x\in U_n$ for some $n\in\bN$ and $U_n$ is open and $g$ is constant on $U_n$, or 
$x\in\cl(U_0)$ and then $g(x)=0=g(t)$ for $t\in U_0$.  In both cases, $g$ is $\SQC$ at $x$. 
\end{itemize}
In conclusion, $g$ is super quasi-continuous at any $x\in X$.
Hence the family non-Borel $\SQC$ functions is positively $2^\co$-coneable. In the same way one can verify that it is also negatively $2^\co$-coneable.

(4)
It is easy to observe that for any $\alpha,\beta<2^\co$ we have 
$$
\max(g_\alpha,g_\beta)(x)=\left\{\begin{array}{cl}
		1 & \text{for } x\in D_\alpha\cup D_\beta;\\
		1 & \text{for } x\in U_n, n\in N_{0};\\
		0 & \text{for other } x.
	\end{array}\right.
$$
and 
$$
\min(g_\alpha,g_\beta)(x)=\left\{\begin{array}{cl}
		1 & \text{for } x\in D_\alpha\cap D_\beta;\\
		1 & \text{for } x\in U_n, n\in N_{0};\\
		0 & \text{for other } x.
	\end{array}\right.
$$
so $\min(g_\alpha,g_\beta)$ and $\max(g_\alpha,g_\beta)$ are non-Borel and $\SQC$.
\end{proof}

Now we will consider a functions between topological spaces $X$ and $Y$. The following fact follows from Proposition \ref{p3}.
\begin{corollary}\label{c1}
Let $X$ be a space with the property $BP_3'$ and let $Y$ be a Hausdorff space with $|Y|\ge 2$. Then for all disjoint nowhere dense sets $F_0,F_1\subset X$ and for any different $y_0,y_1\in Y$ there is a super quasi-continuous function $g\colon X\to Y$ such that $g(x)=y_i$ for $x\in F_i$, $i=0,1$. 	
\end{corollary}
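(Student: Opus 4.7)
The plan is to reduce the general two-valued target setting to the binary case already handled by Proposition~\ref{p3}. First, I would apply Proposition~\ref{p3} to the nowhere dense sets $F_0,F_1$, obtaining a super quasi-continuous function $g_0\colon X\to\{0,1\}$ with $g_0(x)=i$ for $x\in F_i$, $i=0,1$. Then I would define $g\colon X\to Y$ by composing with the natural injection $\phi\colon\{0,1\}\to Y$ sending $0\mapsto y_0$ and $1\mapsto y_1$, i.e., $g(x):=y_{g_0(x)}$. By construction $g(x)=y_i$ for $x\in F_i$, so only super quasi-continuity of $g$ needs verification.

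To verify this, the key observation is that because $Y$ is Hausdorff, $y_0$ and $y_1$ can be separated by disjoint open sets $V_0,V_1\subset Y$. Fix $x_0\in X$, a neighbourhood $W$ of $x_0$, and a neighbourhood $V$ of $g(x_0)=y_{g_0(x_0)}$. Shrinking $V$ to $V':=V\cap V_{g_0(x_0)}$ if necessary, we may assume $y_{1-g_0(x_0)}\notin V'$, so that $\phi^{-1}(V')=\{g_0(x_0)\}$. Since $\{g_0(x_0)\}$ is open in $\{0,1\}$, super quasi-continuity of $g_0$ produces some $x\in C_{g_0}\cap W$ with $g_0(x)=g_0(x_0)$, hence $g(x)\in V'\subset V$.

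Finally, it remains to observe that $x\in C_g$. This is immediate: because $\{0,1\}$ carries the discrete topology, every map out of it (in particular $\phi$) is continuous, so continuity of $g_0$ at $x$ transfers to continuity of $g=\phi\circ g_0$ at $x$. There is no real obstacle here; the corollary is essentially a formal consequence of Proposition~\ref{p3} together with the Hausdorff separation of $y_0$ and $y_1$, which is exactly what makes $\{y_0,y_1\}$ behave as a discrete two-point subspace of $Y$.
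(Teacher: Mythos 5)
Your proof is correct and takes essentially the same approach as the paper: compose the $\{0,1\}$-valued function from Proposition~\ref{p3} with the map $i\mapsto y_i$, which is a continuous embedding of the discrete two-point space into $Y$ precisely because $Y$ is Hausdorff. The paper simply asserts that this composition is super quasi-continuous, while you spell out the verification; the content is identical.
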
 
\begin{proof}
Let $F_0$ and $F_1$ be disjoint nowhere dense subsets of $X$ and let $\tilde{g}\colon X\to\{ 0,1\}$ be a super quasi-continuous function such that $\tilde{g}(x)=i$ for $x\in F_i$, $i=0,1$, as in Proposition \ref{p3}. Fix different $y_0,y_1\in Y$. We consider $\{ 0,1\}$ as a topological space equipped in the discrete topology. Then the map $\varphi\colon \{0,1\}\to Y$ defined by $\varphi(i)=y_i$, $i=0,1$, is a continuous embedding, and $g=\varphi\circ \tilde{g}$ is as we need.
\end{proof}

As a consequence of the Corollary \ref{c1} we obtain the following facts which are a generalization of \cite[Proposition 3.3, Theorem 3.4]{LH}.
\begin{proposition}\label{p4}
Let $X$ be a separable topological space with the property $BP_3'$ in which there is a
closed nowhere dense set $F\subset X$ with the cardinality $\co$, and let $Y$ be a Hausdorff space with $|Y|\ge 2$. Then the family of all
super quasi-continuous non Borel measurable functions from $X$ to $Y$ has cardinality $\ge 2^\co$. Moreover, if $2\le |Y|\le\co$, then the cardinality of this family is equal to $2^\co$.
\end{proposition}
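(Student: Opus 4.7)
The plan is to follow exactly the template of Theorem \ref{t1} and Proposition \ref{p2}: build $2^\co$ distinct non-Borel super quasi-continuous functions by varying their values on the reservoir $F$, invoking Corollary \ref{c1} as the key extension principle, and then match this from above by a cardinality count when $|Y|\le\co$.

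For the lower bound, first I fix distinct points $y_0,y_1\in Y$, which are available because $|Y|\ge 2$. For every $A\subseteq F$ the sets $A$ and $F\setminus A$ are disjoint subsets of the closed nowhere dense set $F$, hence are themselves nowhere dense in $X$, and Corollary \ref{c1} produces a super quasi-continuous function $g_A\colon X\to Y$ with $g_A\restr A\equiv y_1$ and $g_A\restr (F\setminus A)\equiv y_0$. The map $A\mapsto g_A$ is injective, because $g_A\restr F$ already recovers $A$; so the family $\{g_A:A\subseteq F\}$ already has size $2^\co$.

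To pass from $2^\co$ functions to $2^\co$ non-Borel functions, I use that $\{y_1\}$ is closed in the Hausdorff space $Y$: if $g_A$ were Borel measurable then $A=g_A^{-1}(\{y_1\})\cap F$ would be a Borel subset of $X$. Under the hypotheses on $X$ (separability, together with the regularity implicit in Theorem \ref{t1}, guaranteeing $|\Borel(X)|\le\co$), there are at most $\co$ candidates $A\subseteq F$ for which $g_A$ could be Borel, so at least $2^\co$ choices of $A$ give non-Borel $g_A$, which proves the lower bound.

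For the upper bound under $|Y|\le\co$, the same cardinality analysis yields $|X|\le\co$, so $|Y^X|\le\co^\co=2^\co$ bounds the total number of functions from $X$ to $Y$ and therefore bounds the family of non-Borel $\SQC$ functions as well. The only genuine subtlety I foresee is tracking precisely which separation and base-countability properties implicit in ``separable topological space'' deliver the Borel cardinality bound $|\Borel(X)|\le\co$ and the bound $|X|\le\co$; once these two bounds are in hand, both halves of the proposition reduce to straightforward counting on top of Corollary \ref{c1}.
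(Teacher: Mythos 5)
Your argument follows the paper's proof essentially verbatim: the lower bound is obtained from Corollary \ref{c1} (the paper leaves the details implicit, while you spell out the injection $A\mapsto g_A$ and the Borel-counting step exactly as in the proof of Theorem \ref{t1}), and the upper bound is the same count $|X|\le\co$, hence $|Y^X|\le\co^\co=2^\co$. The subtlety you flag --- that bare separability does not by itself yield $|\Borel(X)|\le\co$, nor even $|X|\le\co$ --- is genuine, but it is present in the paper's own proof as well (which asserts $|X|\le\omega^\omega=\co$ from separability and delegates the non-Borel count entirely to Corollary \ref{c1}), so your write-up is at least as complete as the original.
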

\begin{proof}
Let $\F$ denote the family 	of all
super quasi-continuous non Borel measurable functions from $X$ to $Y$. The inequality $|\F|\ge 2^\co$ yields from Corollary \ref{c1}. Since $X$ is separable, so $|X|\le\omega^\omega=\co$. Since there is $F\subset X$ with $|F|=\co$, we have $X=\co$. Hence $|\F|\le |Y^X|\le \co^\co=2^\co$.
\end{proof}

\begin{corollary}
Let $X$ be an uncountable Polish space and let $Y$ be a Hausdorff space with $|Y|\ge 2$. Then the family of all
quasi-continuous non Borel measurable functions from $X$ to $Y$ has cardinality $\ge 2^\co$. Moreover, if $2\le |Y|\le\co$, then the cardinality of this family is equal to $2^\co$.
\end{corollary}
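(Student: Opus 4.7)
The plan is to derive this corollary directly from Proposition~\ref{p4}, so my task reduces to verifying that an uncountable Polish space $X$ satisfies its three hypotheses: separability, property $BP_3'$, and existence of a closed nowhere dense subset of cardinality $\co$. Separability is built into the definition of a Polish space. For the second, since $X$ is separable metrizable, Bors\'{i}k's theorem gives that $X$ has property $BP_1$, and combining the implications $BP_1\Rightarrow BP_2\Rightarrow BP_3$ with Proposition~\ref{p1} yields $BP_3'$.

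The only step that calls for a small extra argument is producing a closed nowhere dense $F\subset X$ with $\card(F)=\co$. By the perfect set theorem, every uncountable Polish space contains a topological copy $K$ of the Cantor set $2^\omega$. If $K$ already has empty interior in $X$, I take $F=K$. Otherwise I pick inside $K$ a closed subset $K'$ which, after identifying $K$ with $2^\omega$, has the form $\{x\in 2^\omega : x(2n)=0 \text{ for all } n\in\bN\}$. This $K'$ is itself homeomorphic to $2^\omega$, so $\card(K')=\co$, and it is nowhere dense in $K$ since every basic cylinder in $2^\omega$ contains points $x$ with some even coordinate equal to $1$. Any nonempty open $V\subset X$ contained in $K'$ would yield an open subset $V\cap K$ of $K$ inside $K'$, contradicting the emptiness of the interior of $K'$ in $K$; hence $K'$ is nowhere dense in $X$ and we may take $F=K'$.

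With all three hypotheses of Proposition~\ref{p4} verified, I immediately obtain at least $2^\co$ functions in $\SQC(X,Y)$ that are non Borel measurable, and the inclusion $\SQC(X,Y)\subset\QC(X,Y)$ gives the desired lower bound on the family of quasi-continuous non Borel functions. For the equality when $2\le|Y|\le\co$, every uncountable Polish space has $\card(X)\le\co$, so the total number of functions from $X$ to $Y$ is at most $\co^\co=2^\co$, matching the lower bound. I do not expect any real obstacle in this argument; the only mildly non-routine step is the construction of a nowhere dense Cantor subset, but the perfect set theorem together with the explicit construction above handles it in one line.
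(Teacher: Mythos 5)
Your proposal is correct and follows exactly the route the paper intends: the corollary is stated as an immediate consequence of Proposition~\ref{p4}, and you verify its hypotheses for an uncountable Polish space (separability, $BP_3'$ via Bors\'{i}k's theorem and the chain $BP_1\Rightarrow BP_2\Rightarrow BP_3\Rightarrow BP_3'$, and a closed nowhere dense copy of the Cantor set from the perfect set theorem), then pass from $\SQC$ to $\QC$ and add the trivial upper bound. The paper leaves these verifications implicit; your explicit construction of the nowhere dense Cantor subset is a correct filling-in of that routine step.
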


The proof of the next theorem is similar to the proof  of Theorem \ref{t2}. 
\begin{theorem}\label{t3}
	Let $X$ be a  topological space with the property $BP_3'$ in which there is a
	closed nowhere dense set $F\subset X$ which is homeomorphic with the Cantor set. Then 
	
	\begin{enumerate}
		\item For any topological group $(Y,+)$ there exists an additive group of size $\co$ contained in the family of non-Borel $\SQC(X,Y)$ functions plus the zero function.
		\item Let $Y$ be a linear space over $\mathbb{K}\in\{ \bQ,\bR,\bC\}$. There is a $\co$-dimensional linear space over $\mathbb{K}$ contained in the family of non-Borel $\SQC(X,Y)$ functions plus the zero function.
		\item If $Y$ is a lattice and $|Y|\ge 2$   then there is a  lattice of size $\co$ contained in the family of non-Borel $\SQC(X,Y)$ functions.
	\end{enumerate}
\end{theorem}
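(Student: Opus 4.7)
The plan is to reuse the two constructions from the proof of Theorem~\ref{t2} almost verbatim, substituting the scalars $1\in\bZ$, $1\in\bK$, and $1\in\{0,1\}$ by an appropriately chosen nontrivial element of the codomain $Y$. The common setup: partition $F$ into $\co$-many pairwise disjoint Bernstein sets $\{F_\alpha:\alpha<\co\}$; let $\{U_n:n\ge 0\}$ witness $BP_3'$ for $F$; choose an independent family $\{N_\alpha:\alpha<\co\}$ of infinite subsets of $\bN\setminus\{0\}$ and an independent family $\{\Delta_\alpha:\alpha<\co\}$ of subsets of $\co$; and set $D_\alpha:=\bigcup_{\xi\in\Delta_\alpha}F_\xi$, a Bernstein set in $F$.

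For parts (1) and (2), I will fix $y_1\in Y\setminus\{0_Y\}$ and define $f_\alpha\colon X\to Y$ by $f_\alpha(x)=y_1$ on $D_\alpha\cup\bigcup\{U_n:n\in N_\alpha\}$ and $f_\alpha(x)=0_Y$ otherwise. Since each $f_\alpha$ takes only the two values $0_Y$ and $y_1$, the case analysis in the proof of Theorem~\ref{t2}(1) transfers verbatim to give super quasi-continuity. For a combination $f=\sum_{i=1}^m a_i f_{\alpha_i}$ with pairwise distinct $\alpha_i$'s, independence of $\{\Delta_\alpha\}$ supplies, for each $j\le m$, an index $\xi_j\in\Delta_{\alpha_j}\setminus\bigcup_{i\ne j}\Delta_{\alpha_i}$ with $f\restr F_{\xi_j}\equiv a_j y_1$, and an index $\xi_*\notin\bigcup_i\Delta_{\alpha_i}$ with $f\restr F_{\xi_*}\equiv 0_Y$. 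In case (2), $a y_1=0$ forces $a=0$ in a vector space, yielding linear independence of the $f_\alpha$'s; every nonzero $\bK$-combination then takes two distinct values on the pair of Bernstein sets $F_{\xi_1}$ and $F_{\xi_*}$, hence its restriction to $F$ is non-Borel, hence the combination is non-Borel. In case (1), the identity $f\restr F_{\xi_j}\equiv a_j y_1$ shows that $f\equiv 0_Y$ forces $a_j y_1=0_Y$ for every $j$; conversely, if all $a_j y_1=0_Y$, then every value of $f$ equals $(\sum_{i\in S} a_i)y_1=0_Y$ for some finite index set $S$, so $f\equiv 0_Y$. Hence the generated additive group $\mathcal{G}$ is isomorphic to $\bigoplus_{\alpha<\co}\la y_1\ra$, of cardinality $\co$ whether $\la y_1\ra$ is infinite cyclic or finite cyclic of order $\ge 2$; and for any $f\ne 0$, reindexing gives $a_1 y_1\ne 0_Y$, whereupon $f\restr F$ takes the two distinct values $a_1 y_1$ and $0_Y$ on Bernstein subsets of $F$ and is non-Borel. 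Super quasi-continuity of any such combination is checked by the same case split as in Theorem~\ref{t2}(1).

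For part (3), pick distinct $a,b\in Y$ and set $y_0:=a\wedge b$ and $y_1:=a\vee b$, so that $y_0<y_1$ and $\{y_0,y_1\}$ is closed under $\wedge$ and $\vee$. Define $g_\alpha\colon X\to Y$ for $\alpha<\co$ exactly as in Theorem~\ref{t2}(3), but with values $y_0,y_1$ in place of $0,1$. Every element $g$ of the lattice $\mathcal{L}$ generated by $\{g_\alpha:\alpha<\co\}$ takes values in $\{y_0,y_1\}$, so it is determined by its $y_1$-preimage; and this preimage intersected with $F$ is, by distributivity, a Boolean combination in disjunctive normal form of the finitely many $D_{\alpha_i}$'s that appear in $g$. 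Independence of $\{\Delta_\alpha\}$ ensures any such nontrivial Boolean combination is a nontrivial union of $F_\xi$'s, hence a Bernstein set in $F$, so $g$ is non-Borel. Super quasi-continuity is verified as in Theorem~\ref{t2}(4), using that every $g_{\alpha_i}$ equals $y_1$ on $\bigcup\{U_n:n\in N_0\}$. Finally, $|\mathcal{L}|=\co$: at most $\co$ because each element is a lattice polynomial in finitely many generators drawn from a set of size $\co$, and at least $\co$ because the $g_\alpha$ are pairwise distinct.

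I expect the main subtlety to lie in case (1), where $y_1$ may have finite order in $Y$ and naive cancellation in $Y$ fails. The fix is the direct-sum identification $\mathcal{G}\cong\bigoplus_{\alpha<\co}\la y_1\ra$ obtained above, which simultaneously controls the cardinality of $\mathcal{G}$ and the non-Borel character of each nonzero element.
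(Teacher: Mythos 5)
Your proposal is correct and takes essentially the same route as the paper, which offers no proof of Theorem~\ref{t3} beyond the remark that it is ``similar to the proof of Theorem~\ref{t2}'': replacing the scalar $1$ by a fixed $y_1\in Y\setminus\{0\}$ (resp.\ by the pair $a\wedge b$, $a\vee b$ in the lattice case) is exactly the intended adaptation, and you correctly isolate and handle the one real subtlety in part (1), namely that $y_1$ may have finite order, so that non-vanishing of a combination must be detected through $a_jy_1\neq 0$ rather than $a_j\neq 0$. The only caveat --- a defect of the theorem's statement rather than of your argument --- is that $Y$ must carry enough separation (e.g.\ be $T_0$) for $0$ and $a_1y_1$ to be separated by a Borel set, since otherwise non-Borel functions into $Y$ need not exist at all.
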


Recall that a Riesz space is an ordered vector space that is also a lattice.
\begin{theorem}\label{t4}
	Let $X$ be a topological space with the property $BP_3'$ in which there is a
closed nowhere dense set $F\subset X$ which is homeomorphic with the Cantor set and let $Y$ be a Riesz space over $\mathbb{R}$ with the order $\le$. Then there exists a $\co$-dimensional Riesz space over $\bR$ with the order $f\le g$ iff $f(x)\le g(x)$ for every $x\in X$, contained in the family of non-Borel $\SQC(X,Y)$ functions plus the zero function.
\end{theorem}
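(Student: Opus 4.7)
The plan is to recycle the construction from the proof of Theorem \ref{t2}(2) and close it under the lattice operations. Fix $y_0 \in Y$ with $y_0 > 0$; such an element exists whenever $Y$ is nontrivial, because for any $y \neq 0$ at least one of $y^+ = y \vee 0$ and $y^- = (-y) \vee 0$ is strictly positive. With $D_\alpha, U_n, N_\alpha, \Delta_\alpha, F_\alpha$ as in the proof of Theorem \ref{t2}, set $A_\alpha = D_\alpha \cup \bigcup\{U_n : n \in N_\alpha\}$ and $f_\alpha = \chi_{A_\alpha} \cdot y_0 \colon X \to Y$. Let $\mathcal{V}$ be the Riesz subspace of $Y^X$ (under pointwise operations and pointwise order) generated by $\{f_\alpha : \alpha < \co\}$; this will be the sought-after space.

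Since $y_0 > 0$, the assignment $a \mapsto a y_0$ is a Riesz embedding of $\bR$ into $Y$; in particular $ay_0 \vee by_0 = \max(a,b)\, y_0$ for all $a,b \in \bR$. Applied pointwise, this shows $\mathcal{V} = \{\phi \cdot y_0 : \phi \in \mathcal{V}_\bR\}$, where $\mathcal{V}_\bR$ is the Riesz subspace of $\bR^X$ generated by $\{\chi_{A_\alpha} : \alpha < \co\}$, and the map $\phi \mapsto \phi \cdot y_0$ is a Riesz-space isomorphism. The dimension is then $\co$: the upper bound $\dim_\bR \mathcal{V}_\bR \leq \co$ follows because $\mathcal{V}_\bR$ is generated by $\co$ elements under countably many finitary operations and hence has cardinality at most $\co$, while the lower bound is immediate from the $\co$-dimensional linear subspace already produced in Theorem \ref{t2}(2) with $\bK = \bR$.

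It remains to verify that every nonzero $\phi \in \mathcal{V}_\bR$ is non-Borel and $\SQC$. Such a $\phi$ admits a canonical representation $\phi = \sum_{j=1}^m c_j \chi_{E_j}$, where each $E_j$ is an atom of the Boolean algebra generated by some finite collection $\{A_\alpha : \alpha \in I\}$, each $E_j \subseteq \bigcup_{\alpha \in I} A_\alpha$, and $c_j \neq 0$; this is the standard normal form for members of a Riesz subspace generated by characteristic functions. For non-Borelness, independence of $\{\Delta_\alpha\}$ implies that for each value $c$ actually attained by $\phi$, the set $\phi^{-1}(\{c\}) \cap F$ is a non-empty union $\bigcup_{\xi \in R} F_\xi$ with $R \subsetneq \co$; since the $F_\xi$'s are pairwise disjoint Bernstein sets in $F$, this union is itself a Bernstein set in $F$, hence non-Borel in $F$ and therefore in $X$. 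For $\SQC$ at $x \in X$, the three-case analysis from Theorem \ref{t2}(2) transfers verbatim: if $x \in U_n$ for some $n \geq 0$ then $\phi$ is constant on the open set $U_n$; if $x \in X \setminus (F \cup \bigcup_{n \geq 0} U_n)$ then $x \in \cl(U_0)$ by (iii') and $\phi$ vanishes on $\{x\} \cup U_0$; and if $x \in F$, then independence of $\{N_\alpha\}$ yields $n \in \bN \setminus \{0\}$ with $n \in N_\alpha \Leftrightarrow x \in D_\alpha$ for every $\alpha \in I$, whence $\phi$ is constant on $\{x\} \cup U_n$ while $F \subseteq \cl(U_n)$ by (ii') provides the required continuity point arbitrarily close to $x$. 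The main obstacle is precisely the passage from linear span to Riesz span; it is resolved by the fact that all generators are scalar multiples of the single positive vector $y_0$, which reduces the lattice operations in $Y^X$ to pointwise $\max$ and $\min$ in $\bR$, after which the Bernstein-set and atom-tracking arguments of Theorem \ref{t2} apply essentially unchanged.
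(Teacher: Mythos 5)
Your proof is correct, but it takes a genuinely different route from the paper's. The paper keeps only the linear span $L$ of the functions $f_\alpha$ (the same span as in Theorem \ref{t2}(2)) and argues that $L$ is already a lattice in the induced pointwise order, with the join of $f=\sum a_if_{\alpha_i}$ and $g=\sum b_if_{\alpha_i}$ given coefficientwise by $\sum\max(a_i,b_i)f_{\alpha_i}$; note that this is the supremum \emph{within} $L$ but not the pointwise maximum (on a set $U_n$ with $n\in N_{\alpha_1}\cap N_{\alpha_2}$ the two disagree, e.g.\ for $a=(1,-1)$, $b=(-1,1)$), so the paper's $L$ is a Riesz space under the pointwise order without being a sublattice of $Y^X$. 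You instead take the full Riesz subspace of $Y^X$ generated by the $f_\alpha$'s, which is strictly larger, and you control it via the atomic normal form $\sum_j c_j\chi_{E_j}$ together with the independence of both $\{\Delta_\alpha\}$ and $\{N_\alpha\}$: every atom of the finitely generated Boolean algebra meets $F$ in a nonempty proper union of the $F_\xi$'s, hence every level set of a nonzero element traces a Bernstein set on $F$, and the $\SQC$ case analysis transfers because each point of $F$ shares an atom with some $U_n$. Your approach costs the extra normal-form bookkeeping but buys a genuine sublattice of $Y^X$, i.e.\ the lattice operations really are the pointwise ones. One point where you are more careful than the paper: the paper fixes an arbitrary $y\in Y\setminus\{0\}$, but if $y$ is incomparable with $0$ then $ty\ge 0$ forces $t=0$, the pointwise order on $L$ degenerates, and $L$ is not a lattice; your explicit choice of $y_0>0$ (which exists since $y=y^+-y^-$) is exactly what makes either argument go through. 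Both proofs tacitly assume $Y\neq\{0\}$.
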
 	
\begin{proof}
Divide $F$ onto $\co$-many pairwise disjoint Bernstein sets in $F$,  $F_\alpha$, $\alpha<\co$. Let $\{ U_n\colon n\ge 0\}$ be a family witnessing the property $BP_3'$ for $F$.
Let $\{  N_\alpha\colon \alpha <\co\}$ be an independent family of infinite subsets of $\bN\setminus\{ 0\}$. Fix $y\in Y\setminus\{ 0\}$. For every $\alpha<\co$ define a function $f_\alpha\colon X\to Y$ via
$$
f_\alpha(x)=\left\{\begin{array}{cl}
	y & \text{for } x\in F_\alpha;\\
	y & \text{for } x\in U_n, n\in N_{\alpha};\\
	0 & \text{for other } x.
\end{array}\right.
$$	
One can verify, similarly to the proof of part (2) in Theorem \ref{t2}, that the functions $f_\alpha$'s generate a linear space $L$ contained in the  family of non-Borel $\SQC(X,Y)$ functions plus the zero function. We will show that $L$ with the order $\le$ is a lattice. Fix $f,g\in L$. Then $f=\sum_{i=1}^n a_if_{\alpha_i}$, $g=\sum_{i=1}^n b_if_{\alpha_i}$ for some $\alpha_1<\ldots\alpha_n<\co$ and $(a_1,\ldots,a_n),(b_1,\ldots,b_n)\in\bR^n$, and $\max(f,g)=\sum_{i=1}^n\max(a_i,b_i)f_{\alpha_i}$, $\min(f,g)=\sum_{i=1}^n\min(a_i,b_i)f_{\alpha_i}$ belong to $L$.
\end{proof}

Note that the size $\co$ in Theorem \ref{t3} is maximal possible if we assume that $X$ is a Polish space and $Y$ is a topological group of size $\le\co$.
\begin{theorem}\label{t4}
Assume that $X$ is a space with the density $\kappa$ and $(Y,+)$ be a topological group with $\card(Y)=\lambda$. Then any group contained in the family $\SQC(X,Y)$ has cardinality $\le \lambda^\kappa$. In particular, if $X$ is a separable Baire space and $Y$ is a metrizable with $|Y|\le\co$ then any group contained in the family $\QC(X,Y)$ has cardinality $\le \co$.
\end{theorem}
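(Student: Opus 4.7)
My plan is to bound $|G|$ by showing that every element of the group $G \subset \SQC(X,Y)$ is determined by its values on a fixed dense set $D \subset X$ with $|D| = \kappa$, so that the restriction map $G \to Y^D$ is injective and therefore $|G| \le \lambda^\kappa$.

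The key lemma I need is the following: \emph{if $Y$ is Hausdorff, $D \subset X$ is dense, $h \in \SQC(X,Y)$, and $h\restr D$ is constantly equal to some $c \in Y$, then $h \equiv c$ on all of $X$.} I would prove this by contradiction. Suppose $h(x_0) \ne c$ and choose disjoint open sets $V_0 \ni c$ and $V_1 \ni h(x_0)$ in $Y$. Super quasi-continuity at $x_0$ produces, in any neighbourhood $W$ of $x_0$, a continuity point $x \in C_h \cap W$ with $h(x) \in V_1$. Continuity of $h$ at $x$ gives an open neighbourhood $W' \subset W$ of $x$ with $h(W') \subset V_1$. By density of $D$, pick $d \in D \cap W'$: then $h(d) \in V_1$, whereas by hypothesis $h(d) = c \in V_0$, contradicting $V_0 \cap V_1 = \emptyset$.

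With the lemma in hand the theorem is immediate. Fix a dense $D \subset X$ with $|D| = \kappa$ and suppose $f, g \in G$ agree on $D$. Then $h := f - g$ lies in $G \subset \SQC(X,Y)$ and vanishes on $D$; since $(Y,+)$ is a (Hausdorff) topological group, the lemma applied with $c = 0$ forces $h \equiv 0$, i.e.\ $f = g$. Hence $f \mapsto f\restr D$ is an injection $G \hookrightarrow Y^D$, giving $|G| \le |Y|^{|D|} = \lambda^\kappa$.

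For the second assertion, recall from the introduction that whenever $X$ is a Baire space and $Y$ is metric one has $\QC(X,Y) = \SQC(X,Y)$, so any group sitting inside $\QC(X,Y)$ is already a group inside $\SQC(X,Y)$. Separability yields $\kappa = \aleph_0$ and $|Y| \le \co$ yields $\lambda \le \co$, so the bound specialises to $\co^{\aleph_0} = \co$. The main (and really only) obstacle is the lemma; once one sees that super quasi-continuity transports the value on $D$ to nearby continuity points and then, via ordinary continuity, to a whole open set meeting $D$, everything else is a direct cardinal computation.
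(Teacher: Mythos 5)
Your proposal is correct and follows essentially the same route as the paper: restrict to a dense set $D$ of size $\kappa$, use the group structure to reduce injectivity of the restriction map to showing that an $\SQC$ function vanishing on $D$ is identically zero, and handle the second assertion via $\QC=\SQC$ for Baire domains and metric codomains. Your explicit lemma merely spells out (with the Hausdorff separation of $Y$ made visible) the two-line argument the paper gives for the same fact.
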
 
\begin{proof}
Suppose that there exists a group $\cG$ contained in the family $\SQC(X,Y)$ with $\card(\cG)>\lambda^\kappa$. Let $D$ be a dense subset of $X$ with $\card(D)=\kappa$. Since the family of all functions from $D$ to $Y$ has cardinality $\lambda^\kappa$, there are different $f, g\in\cG$ which agree on $D$. Then $f-g\in \cG$, $f-g\ne 0$ and $(f-g)\restr D=0$. Since $D$ is dense in $X$, $(f-g)(x)=0$ for every $x\in X$ at which $f-g$ is continuous. Since $f-g\in\SQC(X,Y)$, so $f-g=0$, a contradiction.

The second part of the assertion follows from the obvious fact that for functions between a Baire space and a metric space $Y$ the notions of quasi-continuity and super quasi-continuity are equivalent.
\end{proof}

As a consequence of Theorems \ref{t3} and \ref{t4} we obtain the following corollary. If $X=Y=\bR$ then this fact can be found in \cite[Theorem 4]{JW}.
\begin{corollary}
	Assume that $X$ is an uncountable Polish space and $(Y,+)$ is a metric linear space over $\mathbb{K}\in\{ \bQ,\bR,\bC\}$  with $|Y|\le \co$. Then the lineability of the family of all non-Borel $\QC(X,Y)$ functions is equal to $\co$.
\end{corollary}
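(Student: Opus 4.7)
The plan is to combine Theorem~\ref{t3}(2) and Theorem~\ref{t4} to obtain matching lower and upper bounds of $\co$ on the lineability.

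For the lower bound, I need to check the hypotheses of Theorem~\ref{t3}(2). Since $X$ is an uncountable Polish space, it is separable metrizable, so by Bors\'{i}k's theorem it has property $BP_1$ and consequently $BP_3'$. Moreover, every uncountable Polish space contains a closed nowhere dense subset $F$ homeomorphic to the Cantor set: the perfect kernel of $X$ is a non-empty perfect Polish space, inside which a standard Cantor scheme can be arranged so that the resulting copy of $2^\omega$ has empty interior in $X$. With these hypotheses in place, Theorem~\ref{t3}(2) (applied with the given $Y$ as a linear space over $\mathbb{K}$) yields a $\co$-dimensional $\mathbb{K}$-linear subspace contained in $\SQC(X,Y)\cup\{0\}$ and consisting, apart from $0$, of non-Borel functions. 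Since $X$ is Polish and hence Baire, while $Y$ is metric, the equivalence $\SQC(X,Y)=\QC(X,Y)$ recorded in the introduction converts this into a $\co$-dimensional linear space of non-Borel $\QC(X,Y)$ functions, giving lineability $\ge\co$.

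For the upper bound, I invoke the second half of Theorem~\ref{t4}: as $X$ is separable Baire and $Y$ is metric with $|Y|\le\co$, every additive subgroup of $\QC(X,Y)$ has cardinality at most $\co$. Any $\mathbb{K}$-linear space of infinite dimension $\kappa$ contains $\kappa$ linearly independent vectors, hence has cardinality at least $\kappa$, so a $\mathbb{K}$-linear subspace of dimension strictly greater than $\co$ would be an abelian group of cardinality strictly greater than $\co$, contradicting the cited bound. Therefore the lineability of the non-Borel $\QC(X,Y)$ functions is at most $\co$, and combined with the lower bound it equals $\co$.

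The only genuinely technical point is the existence of a closed nowhere dense copy of the Cantor set in every uncountable Polish space; this is a standard fact of descriptive set theory that I will invoke rather than reprove, and apart from it the argument is pure bookkeeping that splices the two theorems together.
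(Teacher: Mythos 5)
Your proposal is correct and follows exactly the route the paper intends: the paper states this corollary without proof, merely noting it is "a consequence of Theorems \ref{t3} and \ref{t4}", and you have correctly supplied the bookkeeping (property $BP_3'$ via Bors\'{i}k's theorem, a closed nowhere dense Cantor set in the perfect kernel, $\SQC=\QC$ over a Baire domain with metric range, and the cardinality bound on groups for the upper estimate). The only caveat, inherited from the paper's own formulation rather than introduced by you, is the degenerate case $Y=\{0\}$, which both statements implicitly exclude.
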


J. W\'{o}dka noticed at the end of \cite{JW} that the family of all quasi-continuous functions from $\bR$ to $\bR$ that are not Lebesgue measurable plus the zero function contains a $\co$-generated free algebra. We will generalize this fact in the next proposition.

\begin{theorem}
Let $X$ be a topological space with the property $BP'_3$ in which there is a 
closed nowhere dense set $F\subset X$ which is homeomorphic with the Cantor set. Then there exists a $\co$-generated free algebra (over $\mathbb{R}$) contained in the family of non-Borel $\SQC(X,\bR)$  functions plus the zero function.
\end{theorem}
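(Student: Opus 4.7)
The plan is to combine the $BP_3'$-based construction of SQC functions in Theorem \ref{t2} with the exponential-like function technique from \cite{JW}, converting $\bQ$-linear independence of a Hamel basis of $\bR$ into algebraic independence of SQC non-Borel functions.

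First I build a single auxiliary SQC non-Borel function $\psi\colon X\to\bR$ whose range is an infinite set with an accumulation point. Partition $F$ (which has cardinality $\co$ since $F\cong 2^{\bN}$) into pairwise disjoint Bernstein sets $\{B_n\colon n\in\bN\}$, fix distinct reals $r_n\to r^*$, let $\{U_m\colon m\in\bN\}$ witness $BP_3'$ for $F$, and pick a partition $\{N_n\colon n\in\bN\}$ of $\bN\setminus\{0\}$ into infinite sets. Define
$$
\psi(x)=\begin{cases}
r_n & \text{if $x\in B_n$ or $x\in U_m$ for some $m\in N_n$,}\\
r^* & \text{otherwise.}
\end{cases}
$$
A case analysis mirroring the SQC verification in Theorem \ref{t2} shows $\psi\in\SQC(X,\bR)$, and $\psi^{-1}(\{r_n\})=B_n\cup\bigcup_{m\in N_n}U_m$ differs by an open set from the non-Borel Bernstein set $B_n$, so $\psi$ is not Borel.

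Next I fix a Hamel basis $\{h_\alpha\colon\alpha<\co\}$ of $\bR$ over $\bQ$ and set $f_\alpha:=\exp(h_\alpha\psi)-1$ for each $\alpha<\co$. Since super quasi-continuity is preserved under post-composition with continuous functions (given a neighborhood $V'$ of $f_\alpha(x_0)$, continuity of $t\mapsto e^{h_\alpha t}-1$ pulls back to a neighborhood $V$ of $\psi(x_0)$, and SQC of $\psi$ supplies a continuity point $x\in W\cap C_\psi$ with $\psi(x)\in V$), every $f_\alpha\in\SQC(X,\bR)$; non-Borelness transfers from $\psi$ to $f_\alpha$ via the homeomorphism $t\mapsto e^{h_\alpha t}-1$ of $\bR$ onto $(-1,\infty)$.

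Finally I verify that $\{f_\alpha\colon\alpha<\co\}$ freely generate the desired algebra. For a nonzero polynomial $P$ without constant term and pairwise distinct $\alpha_1,\ldots,\alpha_n<\co$, one has $P(f_{\alpha_1},\ldots,f_{\alpha_n})(x)=Q(\psi(x))$ with $Q(t):=P(e^{h_{\alpha_1}t}-1,\ldots,e^{h_{\alpha_n}t}-1)$. Binomial expansion writes $Q=\sum_\lambda d_\lambda e^{\lambda t}$ with pairwise distinct real exponents $\lambda=\sum_i j_ih_{\alpha_i}$ (distinct by the $\bQ$-linear independence of the $h_{\alpha_i}$); the transformation sending the coefficients of $P$ to the $d_\lambda$'s is upper triangular (componentwise on $j$) with unit diagonal, so $P\neq 0$ forces $Q\not\equiv 0$. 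Real-analyticity of $Q$ together with the accumulation $r_n\to r^*$ then produces a value $c\in Q(\{r_n\})\setminus\{Q(r^*)\}$ for which $I:=\{n\colon Q(r_n)=c\}$ is nonempty and finite (otherwise continuity would force $Q(r^*)=c$). Consequently $(Q\circ\psi)^{-1}(\{c\})=\bigcup_{n\in I}B_n\cup\bigcup_{n\in I,\,m\in N_n}U_m$, whose Bernstein-type core $\bigcup_{n\in I}B_n$ is a nonempty finite disjoint union of Bernstein pieces of $F$ intersecting and missing every perfect subset of $F$, hence non-Borel. Therefore $P(f_{\alpha_1},\ldots,f_{\alpha_n})=Q\circ\psi$ is SQC (continuous composed with SQC) and non-Borel. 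The main technical obstacle is precisely this last non-Borel check, handled by the analyticity of $Q$ together with the accumulation of the range of $\psi$ at $r^*$, which together force $Q$ to distinguish infinitely many $B_n$'s.
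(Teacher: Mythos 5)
Your proof is correct and follows essentially the same route as the paper: both construct a single non-Borel $\SQC$ base function out of Bernstein pieces of $F$ together with the $BP_3'$ family $\{U_m\}$, and then take exponentials indexed by a Hamel basis of $\bR$ over $\bQ$ as the free generators. The only substantive difference is in verifying that compositions stay non-Borel --- the paper invokes the finite-to-one property of exponential-like functions so that each fiber over $F$ is a finite union of Bernstein sets, whereas you use analyticity of $Q$ together with the accumulation $r_n\to r^*$; note that this step actually requires $Q$ to be non-constant rather than merely $Q\not\equiv 0$, but your triangularity argument already yields a nonzero coefficient at the nonzero exponent $\lambda_{j^*}$ (nonzero because $P$ has no constant term and the $h_{\alpha_i}$ are $\bQ$-independent), so the gap is only in the phrasing.
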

\begin{proof}
We use the method {\it of exponential like functions} proposed in \cite[Proposition~7]{BBF}. 
It suffices to show that there is a non-Borel $\SQC(X,\bR)$ function  such that $h\circ f$
is non-Borel $\SQC(X,\bR)$ for every exponential like function $h\colon\bR\to\bR$.  Then functions of the form $x\mapsto \exp(\beta g(x))$, where parameters $\beta$ are taken from a Hamel basis of $\bR$ over $\bQ$, are free generators of an algebra
included in the set of non-Borel $\SQC(X,\bR)$ functions. In \cite{BBF}, authors apply this method to real functions defined on the unit interval, but it is easy to see that it can also be used under more general assumptions on the domain. Recall that a function $h$ is exponential like whenever
$$h(x):=\sum_{i=1}^m a_i\exp(\beta_i x)$$
where $\beta_i>0$ for $i=1,\dots ,m$.

Divide $F$ onto $\co$-many Bernstein sets  $F_\alpha$, $\alpha<\co$, in $F$. Let $\{ U_n\colon n\ge 0\}$ be a family witnessing the property $BP_3'$ for $F$ and $(q_n)_{n\in\bN}$ be one-to-one sequence of rational numbers from the interval $[0,1]$ with $q_0=0$. Let $[0,1]=\{ y_\alpha\colon\alpha<\co\}$ and $f_0\colon F\to [0,1]$ be the function such that  $f_0(F_\alpha)=\{ y_\alpha\}$. Notice that $f_0$ is non-Borel. Define $f\colon X\to [0,1]$ via
$$ f(x)=\left\{\begin{array}{cl}
	f_0(x) & \text{for } x\in F;\\
	q_n & \text{for } x\in U_n, n\in \bN;\\
	0 & \text{for other } x.
\end{array}\right.
$$
Since $f\restr F=f_0$ is non-Borel, $f$ is non-Borel, too. One can easily see that $f$ is $\SQC$ at every point $x\in X\setminus F$. 
To see that $f$ is $\SQC$ at a point $x\in F$, fix an $\varepsilon>0$, a neighbourhood $V$ of $x$, and $n\in\bN$ such that $|f_0(x)-q_n|<\varepsilon$. Then $x\in\cl(U_n)$, hence $V\cap U_n$ is an open non-empty set on which $f$ is continuous.

We will verify that for every expodential like function $h\colon \bR\to\bR$ the superposition $h\circ f$ is non-Borel and super quasi-continuous. Let $h(x):=\sum_{i=1}^m a_i\exp(\beta_i x)$. Then $h$ is continuous and finite-to-one, see \cite[Lemma 8]{BBF}.
Fix $x_0\in X$, a neighbourhood $V$ of $x_0$, and $\varepsilon>0$. Since $h$ is continuous at $f(x_0)$, there is an open set $W\subset\bR$ such that $f(x_0)\in W$ and $|h(y)-h(f(x_0))|<\varepsilon$ for $y\in W$.
Since $f$ is super quasi-continuous at $x_0$, there is $x\in V$ such that $f$ is continuous at $x$ and $f(x)\in W$. Then $h\circ f$ is continuous at $x$ and $|h(f(x))-h(f(x_0))|<\varepsilon$, hence $h\circ f$ is super quasi-continuous at $x_0$. Now we will verify that $h\circ f$ is non-Borel. Let $z\in\rng(h\circ f)\restr F$. Then $h^{-1}(z)$ is non-empty and finite, hence $(h\circ f)^{-1}(z)$ is a sum of finite many Bernstein sets $F_{\alpha_1}\cup\ldots\cup F_{\alpha_n}$, so it is a Bernstein set in $F$. Thus $(h\circ f)\restr F$ is non-Borel and therefore $h\circ f$ is non-Borel, too. 
\end{proof}

Now, we will return once again to the example of the density topology $d$ on $\bR^k$, see e.g. \cite{AB}. Recall that $X=(\bR^k,d)$ is a Baire space,  $f\in \PWD(X,\bR)$ iff it is a Lebesgue measurable, and $f\in \SQC(X,\bR)$ iff $f$ is measurable and non-degenerate at every point $x\in\bR^k$, see \cite[Lemmas 1,2]{GNS}. A function $f\colon\bR^k\to\bR$ is degenerate at $x\in\bR^k$ if there is a neighbourhood $U$ of $f(x)$ such that the set $f^{-1}(U)$ has density $0$ at $x$, see \cite{ZG1}.
\begin{example}
	Let $X=(\bR^k,d)$, then any additive group contained in the family $\SQC(X,\bR)$ has cardinality $\le \co$. 
\end{example}
\begin{proof}	
	Suppose that there is a group $\cG$ contained in $\SQC(X,\bR)$ with $\card(\cG)>\co$. Since every $f\in\cG$ is measurable, so it is equal almost everywhere to some Borel function $\hat{f}:\bR^k\to\bR$ (with respect to the Euclidean topology in $\bR^k$). Since the family of Borel functions from $\bR^k$ to $\bR$  has cardinality $\co$, there are different functions $f_0,f_1\in\cG$ with $\hat{f_0}=\hat{f_1}$, and consequently, $0\ne f_0-f_1\in\cG$,  but $f_0-f_1=0$ a.e. But then $f_0-f_1$ is degenerate at each  $x\in\bR^k$ with $f_0(x)-f_1(x)\ne 0$, contrary to $f_0-f_1\in\SQC(X,\bR)$.
\end{proof}
 
Observe that the situation is different if we consider  pointwise discontinuous functions.
Assume that $\bK\in\{\bR,\bC\}$,  and let $C\subset\bR$ be a perfect set. Recall that there is a $2^\co$-generated free algebra contained in the family of all functions $f\colon C\to\bK$ that maps every perfect set $P\subset C$ onto $\bK$ (plus the zero function), see \cite[Theorem 2.2]{BGP}. Notice that every such function is non-Borel.
\begin{proposition}
	Let $\bK\in\{ \bR,\bC\}$ and $X$ be a topological space with the property $BP'_3$ in which there is a 
closed nowhere dense set $F\subset X$ which is homeomorphic with the Cantor set. Then there exists a $2^\co$-generated free algebra (over $\mathbb{K}$) contained in the family of non-Borel $\PWD(X,\bK)$  functions plus the zero function.
\end{proposition}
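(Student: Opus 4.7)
The plan is to import the $2^\co$-generated free algebra from \cite[Theorem~2.2]{BGP} via the Cantor-set homeomorphism, and then extend each generator by zero onto $X\setminus F$.

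First, I would fix a homeomorphism $\varphi\colon F\to C$ onto a perfect set $C\subset\bR$ (say the standard Cantor set) and pull back the family $\{g_\alpha\colon\alpha<2^\co\}\subset\bK^C$ furnished by \cite[Theorem~2.2]{BGP} to obtain functions $f_\alpha:=g_\alpha\circ\varphi\colon F\to\bK$. Because $\varphi$ is a homeomorphism (hence maps perfect subsets of $F$ bijectively onto perfect subsets of $C$, and preserves the Borel structure), the $f_\alpha$'s freely generate an algebra $\mathcal{A}$ on $F$ such that every nonzero element of $\mathcal{A}$ maps each perfect subset of $F$ onto $\bK$ and is in particular non-Borel.

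Next, I would extend each $f_\alpha$ to $\tilde{f}_\alpha\colon X\to\bK$ by declaring $\tilde{f}_\alpha\restr F=f_\alpha$ and $\tilde{f}_\alpha(x)=0$ for $x\in X\setminus F$. For any polynomial $P$ without constant term and pairwise distinct $\alpha_1,\dots,\alpha_n$, the composite $h:=P(\tilde{f}_{\alpha_1},\dots,\tilde{f}_{\alpha_n})$ restricts to $P(f_{\alpha_1},\dots,f_{\alpha_n})$ on $F$ and to $0$ on $X\setminus F$ (the latter precisely because $P$ has no constant term). If $P\neq 0$ then $h\restr F\neq 0$ by freeness of $\mathcal{A}$, so the $\tilde{f}_\alpha$'s are algebraically independent. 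Pointwise discontinuity is then automatic: $X\setminus F$ is open and dense in $X$ (since $F$ is closed and nowhere dense) and $h$ vanishes identically on this set, so $X\setminus F\subset C_h$, whence $C_h$ is dense in $X$. Non-Borelness propagates from $F$ to $X$: if $h$ were Borel on $X$, its restriction to the Borel set $F$ would be Borel, contradicting the fact that the nonzero element $h\restr F\in\mathcal{A}$ is non-Borel.

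The only subtle point is confirming that the defining feature of the BGP algebra---every nonzero element maps every perfect subset onto $\bK$---survives the pullback through $\varphi$, but this is immediate since $\varphi$ induces a bijection between the perfect subsets of $F$ and those of $C$. Notably, the hypothesis $BP_3'$ plays no role in this argument; all that is needed is that $F$ be closed, nowhere dense, and homeomorphic with the Cantor set. There is thus no real obstacle: the proof is essentially a transfer-and-extend wrapper around the BGP construction.
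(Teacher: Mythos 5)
Your proof is correct and takes essentially the same approach as the paper: pull back the $2^{\co}$-generated free algebra of \cite[Theorem 2.2]{BGP} onto $F$ via the homeomorphism with the Cantor set, extend each generator by zero, and observe that every nonzero element of the resulting algebra is non-Borel (its restriction to $F$ maps every perfect set onto $\bK$) and vanishes on the open dense set $X\setminus F$, hence is pointwise discontinuous. Your remark that $BP_3'$ is not used is also consistent with the paper, whose proof likewise relies only on $F$ being closed, nowhere dense, and homeomorphic to the Cantor set.
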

\begin{proof}
	Let $\F=\{ f_\alpha\colon \alpha<2^\co\}\subset \bK^F$ be an algebraically independent family of non-Borel functions. For each $\alpha<2^\co$ let $g_\alpha\colon X\to\bK$ be defined by 
$$ g_\alpha(x)=\left\{\begin{array}{ll}
	f_\alpha(x) & \text{for } x\in F;\\
		0 & \text{for } x\in X\setminus F.
	\end{array}\right.
	$$	
Since $g_\alpha\restr F=f_\alpha$ for all $\alpha<\co$, then $\{ g_\alpha\colon\alpha<2^\co\}$ has size $2^\co$, is linearly independent, and generates a free algebra $\mathfrak{A}$. Moreover, every $f\in\mathfrak{A}$ equals $0$ on an open dense subset of $X$, hence it is pointwise discontinuous.
\end{proof}	

Finally, assume that $X$ is not a Baire space and consider the family $\QC(X,\bR)\setminus\SQC(X,\bR)$.
\begin{example}
There is a $\co$-generated free algebra contained in the family $\QC(\bQ,\bR)\setminus\SQC(\bQ,\bR)$ plus the zero function. 
\end{example}
\begin{proof}
Let $f\colon\bQ\to [0,1]$ be the function defined in Example~\ref{ex1}. We will verify that $h\circ f\in \QC(X,\bR)\setminus\SQC(X,\bR)$ for every exponential like function $h\colon\bR\to\bR$. Clearly, $h\circ f$, as a composition of quasi-continuous function with a continuous one, is quasi-continuous, see e.g. \cite{TNeu}. Since $h$ is exponential-like, there is a finite sequence $0=y_0<y_1<\ldots<y_k=1$ such that $h$ is strictly monotone on each interval $[y_{i-1},y_i]$, $i\le k$, see \cite[Lemma 1.4]{BBF}. Since $\rng(f)\subset [0,1]$ is infinite, there is $j\le k$ such that the set $A:=\rng(f)\cap [y_{j-1},y_j]$ is infinite. Then $h\restr [y_{j-1},y_j]$ is a homeomorphism, the set $B:=f^{-1}([y_{j-1},y_j])$ is a non-degenerate  interval in $\bQ$, and $h\circ f$ is continuous at no point of $B$, so $h\circ f\not\in\SQC(\bQ,\bR)$.
\end{proof}

The result above is optimal in the sense that every algebra contained in the family $\QC(\bQ,\bR)$ is of size $\le\co$, because $\card(\bR^\bQ)=\co$. It would be interesting to know if there are a space $X$ and a cardinal $\kappa>\co$ for which  the family $\QC(X,\bR)\setminus\SQC(X,\bR)$ is strongly $\kappa$-algebrable.

\subsection*{Acknowledgements}
We would like to thank Professor  \u{L}ubica Hol\'{a} and the anonymous reviewer for
many useful remarks that have helped us to improve the former version of the
paper.



\section*{Statements \& Declarations}
\begin{enumerate}
	\item The author declares that no funds, grants, or other support were received during the preparation of this manuscript.
	\item The author has no relevant financial or non-financial interests to disclose.
\end{enumerate}
\section*{Data Availability Statement}
Statement Data sharing is not applicable to this article as no datasets were generated or analyzed during the preparation of the paper.


\begin{thebibliography}{abc}
	\bibitem{ABPS}
	R. M. Aron, L. Bernal Gonz\'{a}lez, D. M. Pellegrino, J. B. Seoane Sep\'{u}lveda, {\it Lineability: the search for
	linearity in mathematics},  Monographs and Research Notes in Mathematics. CRC Press, Boca Raton, FL,
	(2016).
	\bibitem{Baire}
	R. Baire, {\it Sur les fonctions de variables r\'{e}elles}, Annali di Mat. {\bf 3}, 1--123 (1899).
	\bibitem{BBF}
M. Balcerzak, A. Bartoszewicz, M. Filipczak, {\it Nonseparable spaceability and strong algebrability of sets of continuous singular functions}, J. Math. Anal. Appl. {\bf 407}, No. 2, 263--269 (2013).
\bibitem{BBG}
A. Bartoszewicz, M. Bienias, Sz. G\l \c{a}b, {\it Independent Bernstein sets and algebraic constructions}, J. Math. Anal. Appl. {\bf 393}, No. 1, 138--143 (2012).
	\bibitem{BGP}
	A. Bartoszewicz, Sz. G\l \c{a}b, {\it Large free linear algebras of real and complex functions}, Linear Algebra Appl. {\bf 438}, No. 9, 3689--3701 (2013).
	\bibitem{BPS}
	 L. Bernal-Gonz\'{a}lez, D. Pellegrino, and J. B. Seoane-Sep\'{u}lveda, {\it Linear subsets of nonlinear sets in topological vector spaces}, Bull. Amer. Math. Soc. (N.S.), {\bf 51}, No. 1, 71-–130 (2014).
	\bibitem{JB}
J. Bors\'{i}k, {\it Algebraic structures generated by real quasicontinuous functions}, Tatra Mountains Math. Publ. {\bf 8}, 175--184 (1996).
\bibitem{BHH}
J. Bors\'{i}k, \v{L}. Hol\'{a}, D. Hol\'{y}, {\it Baire spaces and quasicontinuous mappings}, Filomat {\bf 25}, No. 3, 69--83 (2011).	
\bibitem{AB}
A. Bruckner, {\it Differentiation of integrals}, Am. Math. Mon. {\bf 78}, 51 p. (1971)
\bibitem{FK}
G. Fichtenholz, L. Kantorovitch, {\it Sur les op\'{e}rations lin\'{e}aires dans l'espace des fonctions bornees}, Stud. Math. {\bf 5}, 69--98 (1934)
\bibitem{ZG1}
Z. Grande, {\it La mesurabilite des fonctions de deux variables et de la superposition $F(x,f(x))$}, Dissert. Math. {\bf 159}, 45 p. (1979).
\bibitem{ZG}
Z. Grande, {\it Sur la quasi-continuit\'{e} et la quasi-continuit\'{e} approximative}, Fund. Math. {\bf 129}, No. 3, 167--172 (1988).
\bibitem{GNS}
Z. Grande, T. Natkaniec, E. Stro\'{n}ska, {\it Algebraic structures generated by $d$-quasi continuous functions}, Bull. Pol. Acad. Sci., Math. {\bf35}, No. 11--12, 717--723 (1987).
\bibitem{MCH}
S.T. Hammer, R.A. McCoy, {\it Spaces of densely continuous forms}, Set-Valued Anal. {\bf 5}, No. 3, 247--266 (1997).
\bibitem{LH}
\u{L}. Hol\'{a}, {\it There are $2^\co$ quasicontinuous non Borel functions on uncountable Polish space}, Result. Math. {\bf 76}, No. 3, Paper No. 126, 11 p. (2021).
\bibitem{HH}
\u{L}. Hol\'{a}, D. Hol\'{y}, {\it Minimal usco maps, densely continuous forms and upper semi-continuous functions}, Rocky Mt. J. Math. {\bf 39}, No. 2, 545--562 (2009).
\bibitem{HHM}
\u{L}. Hol\'{a}, D. Hol\'{y}, W.B. Moors, {\it USCO and quasicontinuous	mappings},  Berlin: De Gruyter (2021).
\bibitem{Jech}
T. Jech, {\it Set Theory. The third millennium edition, revised and expanded}, Berlin, Springer (2003).
\bibitem{KN}
A. Katafiasz, T. Natkaniec, {\it A new variant of Blumberg’s theorem}, Real Anal. Exch. {\bf 22}(1996--97), No. 2, 806--813 (1997).
\bibitem{SK}
S. Kempisty, {\it Sur les fonctions quasicontinues}, Fund. Math. {\bf 19}, 184–-197 (1932).
\bibitem{KKur}
K. Kuratowski, {\it Topology. Vol. I. New edition, revised and augmented.}, 
	New York-London: Academic Press; Warszawa: PWN-Polish Scientific Publishers. XX, 560 p. (1966).
\bibitem{TNeu}
T. Neubrunn, {\it Quasi-continuity}, Real Anal. Exch. {\bf 14} (1988--89), No. 2, 259--306 (1989).
\bibitem{ZP}
Z. Piotrowski, {\it Separate and joint continuity in Baire groups}, Tatra Mountains Math. Publ. {\bf 14}, 109--116 (1998).
\bibitem{JW}
J. W\'{o}dka, {\it Subsets of some families of real functions and their algebrability}, Linear Algebra Appl. {\bf 459}, 454--464 (2014).






\end{thebibliography}
\end{document}